\theoremstyle{plain}
\newtheorem{Thm}{Theorem}[section]
\newtheorem{Cor}[Thm]{Corollary}
\newtheorem{Lem}[Thm]{Lemma}
\newtheorem{Def}[Thm]{Definition}
\theoremstyle{definition}
\newtheorem{remark}{Remark}
\numberwithin{equation}{section}
\begin{document}
\title[An extension of James's compactness Theorem]{An extension of James's compactness Theorem}
\author{I. Gasparis}
\address{School of Applied Mathematical and Physical Sciences \\
National Technical University of Athens\\
Athens, 15708 \\
Greece}
\email{ioagaspa@math.ntua.gr}
\keywords{James sup theorem, the \(\ell_1\)-theorem, boundary}
\subjclass{Primary: 46B03; Secondary: 0E302}
\begin{abstract}
Let \(X\) and \(Y\) be Banach spaces and \(F \subset B_{Y^*}\). Endow \(Y\) with
the topology \(\tau_F\) of point-wise convergence on \(F\). Assume that \(T \colon X^* \to Y\)
is a bounded linear operator 
which is \((w^*, \tau_F)\) continuous. 
Assume further that every vector in the range of \(T\) attains its norm at some element of \(F\) (that is, for every \(x^* \in X^*\) there
exists \(y^* \in F\) such that \(\|T(x^*)\| = |y^*(Tx^*)|\)).
Then \(T\) is \((w^*,w)\) continuous. The proof relies on 
Rosenthal's \(\ell_1\)-theorem. As a corollary to the above result, one obtains an alternative proof of James's compactness theorem
that a bounded subset \(K\) of a Banach space \(E\) is relatively weakly compact provided that each functional in \(E^*\) attains
its supremum on \(K\). 
\end{abstract}
\maketitle
\section{Introduction}
James's famous characterization of reflexivity \cite{J} states that a Banach space \(X\) is reflexive if, and only if, every bounded linear functional
on \(X\) attains its norm at an element of \(B_X\). Subsequently, James \cite{J1} characterized the weakly compact subsets of a Banach space as the weakly closed,
bounded subsets on which every bounded linear functional attains its supremum. James's proofs were technically quite demanding and there has been a considerable effort made
for discovering a simpler proof (\cite{P}, \cite{D}, \cite{RO}, \cite{G1}, \cite{G2}, \cite{FL}, \cite{MO}, \cite{K}, \cite{Pf1}, \cite{Pf2}, \cite{M}, \cite{C}). 

When the underlying Banach space is separable, elegant proofs have been given in
\cite{G1} (cf. also \cite{G2}), using Simons's inequality \cite{S}, and in \cite{FL} using convexity methods. The latter ones were refined in \cite{M} to cover
spaces having \(w^*\)-sequentially compact dual balls. The situation changes drastically in the non-separable case where the arguments become more delicate
mainly due to the fact that the \(w^*\) topology on bounded subsets of the dual is no longer metrizable. The methods of \cite{FL} have been extended in \cite{K}
to cover the non-separable case as well. In \cite{Pf1} (cf. also \cite{Pf2}) Godefroy's boundary problem is answered in the affirmative yielding a new proof of James's theorem based on
Simon's inequality, Rosenthal's \(\ell_1\)-theorem \cite{R} and a refinement of a technique due to J. Hagler and W. B. Johnson \cite{HJ}
for extracting \(\ell_1\)-sequences in spaces whose duals contain \(\ell_1\)-sequences without \(w^*\)-convergent subsequences. The proof of James's theorem given  
in \cite{MO} was the first one to combine the results from \cite{S}, \cite{R} and \cite{HJ}. We finally mention paper \cite{C} where the arguments
of \cite{P} are extended to give quantitative versions of James's theorem.

We shall next describe the main result of this paper. The starting point is the following result of D. Amir and J. Lindenstrauss \cite{AL}: Suppose that
\(X\) is a Banach space generated by a weakly compact subset \(K\). Let \(C(K)\) denote the Banach space of scalar-valued functions, continuous on \(K\), under
the supremum norm.
Consider the natural restriction operator \(R \colon X^* \to C(K)\) given by
\(R(x^*) = x^* | K\). Then \(R\) is \((w^*, w)\) continuous. In particular, \(R\) is weakly compact. 

We first observe
that some sort of converse to the previous result holds as well. 
Indeed, let \(K\) be a bounded subset of the Banach space \(X\). After naturally
identifying \(X\) with a closed subspace of \(X^{**}\), we set \(L = \overline{K}^{w^*}\)
and let \(R \colon X^* \to C(L)\) be the natural restriction operator. It is shown in Corollary \ref{C2} that if \(R\) is weakly compact
then \(L \subset X\) and so \(K\) is relatively weakly compact. Now suppose \(K\) satisfies the hypotheses in the statement of James's theorem.
The result will follow once we manage to show that \(R\) is weakly compact. 
Let \(Y = C(L)\) and identify \(K\) with a subset of \(B_{Y^*}\). 
Then the restriction operator \(R \colon X^* \to Y\) satisfies the following two properties: the first one is that
\(R\) is \((w^*, \tau_K)\) continuous, where \(\tau_K\) denotes the topology in \(Y\) of point-wise convergence on \(K\). 
The second property that \(R\) satisfies is that every vector in the range of \(R\) attains its norm at an element of \(K\).
Our main result states that these properties suffice to ensure that \(R\) is weakly compact. More precisely we prove the following theorem.
\begin{Thm} \label{MTh}
Let \(X\) and \(Y\) be Banach spaces and \(F \subset B_{Y^*}\). Endow \(Y\) with
the topology \(\tau_F\) of point-wise convergence on \(F\). Assume that \(T \colon X^* \to Y\)
is a bounded linear operator 
which is \((w^*, \tau_F)\) continuous. 
Assume further that for every \(x^* \in X^*\) there
exists \(y^* \in F\) such that \(\|T(x^*)\| = |y^*(Tx^*)|\).
Then \(T\) is \((w^*,w)\) continuous. 
\end{Thm}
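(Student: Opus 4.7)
\emph{Plan.} I will prove $T$ is $(w^*,w)$-continuous by contradiction. If $T$ fails to be so, then there is a bounded $w^*$-null sequence $(x_n^*)\subset B_{X^*}$ (after a standard reduction using $w^*$-compactness of $B_{X^*}$ and an elementary witness extraction) such that $(Tx_n^*)$ is not weakly null. The $(w^*,\tau_F)$-continuity of $T$ guarantees $\phi(Tx_n^*)\to 0$ for every $\phi\in F$. Passing to a subsequence, I may assume $\|Tx_n^*\|\ge\delta>0$, and the norm-attainment hypothesis supplies $\phi_n\in F$ with $|\phi_n(Tx_n^*)|=\|Tx_n^*\|\ge\delta$.

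Apply Rosenthal's $\ell_1$-theorem to the bounded sequence $(Tx_n^*)\subset Y$: either some subsequence is weakly Cauchy, or some subsequence is equivalent to the unit-vector basis of $\ell_1$. The plan is to show that each alternative is incompatible with the hypotheses.

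In the $\ell_1$-case, every convex combination $u=\sum\alpha_n x_n^*$ satisfies $\|Tu\|\ge c>0$, where $c$ is the $\ell_1$-lower constant, and stays $w^*$-null in $B_{X^*}$. A Hagler--Johnson style gliding hump, performed simultaneously on $(x_n^*)$ and on $(\phi_n)$, should produce convex blocks $(u_k)$ and functionals $\psi_k\in F$ with $|\psi_k(Tu_k)|\ge c$ but $|\psi_j(Tu_k)|<2^{-k}$ for $j<k$, using the $\tau_F$-decay $\phi(Tx_n^*)\to 0$ as the mechanism that drives previously chosen $\psi_j$'s values on later blocks to zero. A Simons-type averaging over $(\psi_k)$ and $(Tu_k)$ then yields a convex combination $v=\sum\beta_k u_k$ and a $w^*$-accumulation point $\psi$ of $(\psi_k)$ in $B_{Y^*}$ with $|\psi(Tv)|\ge c$, which contradicts the $\tau_F$-decay of $(Tu_k)$ evaluated along $F$ together with $v$ being $w^*$-null. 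In the weakly Cauchy case, the $w^*$-limit $z^{**}\in Y^{**}$ of the subsequence annihilates $F$; working either with the differences $x_n^*-x_{n+1}^*$ or with convex blocks and invoking the norm attainers $\phi_n$, one either forces $\|Tx_n^*\|\to 0$ (a contradiction with $\|Tx_n^*\|\ge\delta$) or extracts a bona fide $\ell_1$-subsequence, thereby reducing to the first case.

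The main obstacle will be the $\ell_1$-case. The norm attainers lie only in $F$, a set that is a boundary only for the subspace $T(X^*)$, so one cannot directly invoke Pfitzner's resolution of Godefroy's boundary problem on all of $B_{Y^*}$. The technical difficulty is to arrange the simultaneous gliding-hump extraction so that the $\tau_F$-decay --- the only piece of information one has about the interaction of $(Tx_n^*)$ with $F$ --- can replace a full boundary property of $F$ and drive the Simons-type averaging through to a contradiction.
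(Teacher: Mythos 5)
There is a genuine gap, and it sits at the very first step. Your reduction ``if $T$ fails to be $(w^*,w)$-continuous, then there is a bounded $w^*$-null \emph{sequence} $(x_n^*)$ with $(Tx_n^*)$ not weakly null'' is not a standard reduction and is in general false: the $w^*$-topology on $B_{X^*}$ is metrizable only when $X$ is separable, so a net witnessing discontinuity cannot be replaced by a sequence. This is precisely the crux of the non-separable case. Indeed, if that reduction were available the whole theorem would collapse to the Rainwater--Simons-type statement (Corollary \ref{C1} of the paper): for a $w^*$-null sequence one has $y^*(Tx_n^*)\to 0$ for every $y^*\in F$ by $(w^*,\tau_F)$-continuity, every $\sum_n a_n Tx_n^*$ with $(a_n)\in\ell_1^+$ lies in the range of $T$ and hence attains its norm on $F$, and Corollary \ref{C1} immediately gives that $(Tx_n^*)$ is weakly null --- no Rosenthal dichotomy, no Hagler--Johnson gliding hump, no Simons averaging needed. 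That easy argument is exactly how the paper handles the special case of $w^*$-sequentially compact dual balls (Corollary \ref{Jsep}); the elaborate machinery of Section \ref{S2} exists because the reduction you assume is unavailable.

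The paper's actual route is different in both halves, and your plan is missing both. First, it must rule out $\ell_1$-subsequences of $(Tx_n^*)$ for an \emph{arbitrary} bounded sequence $(x_n^*)$, not a $w^*$-null one; there is then no $\tau_F$-decay at all, so the mechanism you propose to drive the gliding hump has no fuel. The paper replaces it by the $\tau_F$-compactness of $K=TB^*$ (so that $\tau_F$-cluster points remain in the range and still attain their norms on $F$), James's distortion theorem, and a dyadic-tree extraction (Lemmas \ref{L2}--\ref{L6}, Corollaries \ref{C3}--\ref{C4}) producing a sequence isometrically equivalent to the $\ell_1$-basis inside the $F$-admissible set $K$, which Lemma \ref{L4} forbids. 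Second, even after one knows $T$ is weakly compact (a sequential property via Eberlein--\v{S}mulian), $(w^*,w)$-continuity of the operator is a statement about nets and does not follow from weak compactness alone (a rank-one operator on $X^*$ given by a functional in $X^{***}\setminus X^*$ is weakly compact but not $(w^*,w)$-continuous); the paper needs a further argument via the Krein--\v{S}mulian theorem, using that $F$ separates points of $\mathrm{Im}(T)$, to show preimages of norm-closed convex sets are $w^*$-closed. Your proposal omits this step entirely. As a smaller point, your treatment of the weakly Cauchy alternative is also off: one cannot ``extract a bona fide $\ell_1$-subsequence'' from a weakly Cauchy sequence, and the relevant conclusion there is weak convergence to the image of a $w^*$-cluster point (again via Corollary \ref{C1}), not $\|Tx_n^*\|\to 0$.
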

The proof relies on Rosenthal's \(\ell_1\)-theorem and James's distortion theorem \cite{J2}. It does not make use of Simons's inequality.
A key role in the proof is played by the \(\ell_1^+\)-sequences. A normalized sequence \((x_n)\) in a Banach space is an \(\ell_1^+\)-sequence
if there is some \(c > 0\) such that \(\|\sum_n a_n x_n \| \geq c \sum_n a_n\) whenever \((a_n) \in \ell_1\) with \(a_n \geq 0\) for all \(n \in \mathbb{N}\).
It is easy to see that every normalized, non-weakly null sequence in a Banach space, admits an \(\ell_1^+\)-subsequence. 

It is shown in Lemma \ref{L1} that if \((x_n)\) is an \(\ell_1^+\)-sequence then there exists a sequence of positive scalars \((b_n)\) with \((b_n) \in \ell_1\)
and such that \((1 + b_n)\|\sum_{i=1}^n b_i x_i \| = \|\sum_{i=1}^{n+1} b_i x_i \| \) for all \(n \in \mathbb{N}\). This result serves as a
substitute to Simons's inequality and enables us to prove Corollary \ref{C1}, first proved by Simons through his inequality, a Rainwater-type of result
where the set of extreme points of the dual ball is replaced by any boundary of the space.
Corollary \ref{C1} yields a rather elementary proof of James's theorem for spaces having \(w^*\)-sequentially compact dual balls. These results are presented
in Section \ref{S1}. 

The proof of Theorem \ref{MTh} is given in Section \ref{S2}. We first prove Lemma \ref{L2}, a refinement of
the key lemma \ref{L1} for sequences generating \(\ell_1^+\) almost isometrically. This lemma is helpful in the study of the \(F\)-admissible subsets 
of a Banach space \(Y\), where \(F \) is a symmetric subset of \(B_{Y^*}\). A subset \(K\) of \(Y\) is \(F\)-admissible if it is bounded, compact in the topology of point-wise
convergence on \(F\) and if \(\sum_n a_n y_n \) attains its norm at an element of \(F\) for every \((a_n) \in \ell_1\) and every sequence \((y_n)\) in \(K\).
Note that, under the assumptions of Theorem \ref{MTh}, \(T(A)\) is \(F\)-admissible for every \(w^*\)-compact subset \(A\) of \(X^*\). The main point about
these sets is that they can not contain normalized sequences spanning \(\ell_1\) almost isometrically. This is shown in Corollary \ref{C4} by combining
Rosenthal's \(\ell_1\)-theorem, James's distortion theorem and Corollary \ref{C3}.
Theorem \ref{MTh} will then follow because \(T\) maps \(w^*\)-compact subsets of \(X^*\) into weakly compact subsets of \(Y\).
\section{Preliminaries}
We use standard Banach space facts and terminology as may be found in \cite{LT}.
Set \(\mathbb{T} = \{z \in \mathbb{C}: \, |z| = 1\}\). A subset \(A\) of a Banach space is {\em symmetric}
if \(z A \subset A\) for all \(z \in \mathbb{T}\).

Let \(X\) be a Banach space. 
\(B_{X}\) stands for the closed unit ball of \(X\). A boundary for \(X\) is a subset \(B\) 
of \(B_{X^*}\) with the property that for every \(x \in X\) there exists \(b^* \in B\) such that \(|b^*(x)| = \|b\|\).

A sequence in \(X\) is called an \(\ell_1\)-{\em sequence} if it is a basic sequence equivalent to the usual basis of \(\ell_1\). 
A normalized sequence \((e_n)\) in \(X\) is said to generate \(\ell_1\)
{\em almost isometrically} if for every \(\epsilon > 0\) there exists \(n_0 \in \mathbb{N}\)
so that \(\|\sum_{n \geq n_0} a_n e_n \| \geq (1 + \epsilon)^{-1} \sum_n |a_n|\)
for all \((a_n)_{n=n_0}^\infty \in \ell_1\). If \(\Gamma \subset X\) then \(\Gamma\) is said to
generate \(\ell_1(|\Gamma|)\) {\em isometrically} if for \(n \in \mathbb{N}\), all choices of pairwise distinct members
\(x_1, \dots, x_n\) of \(\Gamma\) and all choices of scalars \(a_1, \dots, a_n\) we have that
\(\|\sum_{i=1}^n a_i x_i \| = \sum_{i=1}^n |a_i|\). 

If \(F \subset B_{X^*}\) then the topology \(\tau_F\) of {\em point-wise} convergence
on \(F\) is a linear topology on \(X\) having as a neighborhood basis of the origin the collection of sets
\(\{U(G, \epsilon): \, G \subset F \, \text{ finite} , \, \epsilon > 0\}\) where,
\(U(G,\epsilon) = \{x \in X: \, |b^*(x)| < \epsilon, \, \forall \, b^* \in G \}\).

Let \((Z, \tau)\) be a topological space and \((z_n)\) be a sequence in \(Z\). A \(\tau\)-{\em cluster point}
of \((z_n)\) is any limit of a \(\tau\)-convergent subnet of \((z_n)\). 

If \(M\) is an infinite subset of \(\mathbb{N}\) then the notation \(M=(m_n)\) indicates the increasing enumeration of \(M\).
\([M]\) stands for the set of all infinite subsets of \(M\). 

We let \(\mathcal{T}\) denote the dyadic tree \(\cup_{n=1}^\infty \{0,1\}^n\). \(\mathcal{T}\) is partially ordered by
initial segment inclusion that is, \((a_1 , \dots, a_n ) \leq (b_1 , \dots , b_m)\) in \(\mathcal{T}\)
precisely when \(n \leq m\) and \(a_i = b_i\) for all \(i \leq n\). A branch of \(\mathcal{T}\) is a maximal, under inclusion,
well-ordered subset. A tree of infinite subsets of \(\mathbb{N}\) is a collection \((M_\alpha)_{\alpha \in \mathcal{T}}\)
of members of \([\mathbb{N}]\), indexed by \(\mathcal{T}\), so that \(M_\beta \subset M_\alpha\) whenever \(\alpha \leq \beta\)
while \(M_\alpha \cap M_\beta = \emptyset\) if \(\alpha\) and \(\beta\) are incomparable.
\section{A proof of James's theorem in the separable case} \label{S1}
We let \(\ell_1^+\) denote the positive cone of \(\ell_1\) that is, 
\(\ell_1^+ = \{(a_n) \in \ell_1 : \, a_n \geq 0, \, \forall \, n \in \mathbb{N}\}\).
\begin{Def}
\begin{enumerate}
\item A normalized sequence \((e_n)\) in a Banach space is called an \(\ell_1^+\) sequence
if there is some \(c > 0\) such that \[\|\sum_n a_n e_n \| \geq c \sum_n a_n\]
for all \((a_n) \in \ell_1^+\). We then say that \((e_n)\) is an c-\(\ell_1^+\) sequence.
In case \(c=1\), \((e_n)\) is said to generate \(\ell_1^+\) isometrically. 
\item A normalized sequence \((e_n)\) in a Banach space is said to generate \(\ell_1^+\) almost
isometrically, if for all \(\epsilon > 0\) there exists \(n_0 \in \mathbb{N}\) so that
\((e_n)_{n \geq n_0}\) is an \((1+ \epsilon)^{-1}\)-\(\ell_1^+\) sequence.
\end{enumerate}
\end{Def}
Let \((e_n)\) be a bounded, non-weakly null sequence of non-zero vectors in a Banach space. Then, evidently,
\((e_n /\|e_n\|)\) admits an \(\ell_1^+\) subsequence.
\begin{Lem} \label{L1}
Let \(Y\) be a Banach space and \((y_n)\) a normalized \(\ell_1^+\) sequence in \(Y\).
Then there exists a sequence of positive scalars \((b_n)\) satisfying the following property:
\[ (1+b_n) \biggl \| \sum_{i=1}^n b_i y_i \biggr \| = \biggl \| \sum_{i=1}^{n+1} b_i y_i \biggr \| < 1, \, 
\forall \, n \in \mathbb{N}\]
\end{Lem}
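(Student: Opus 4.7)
The plan is to construct $(b_n)$ recursively, solving the prescribed equation at each step by the intermediate value theorem, and to choose $b_1$ small enough that the $c$-$\ell_1^+$ estimate automatically keeps the partial sums $t_n := \|\sum_{i=1}^n b_i y_i\|$ strictly below $1$. Let $c \in (0,1)$ satisfy $\|\sum a_i y_i\| \geq c\sum a_i$ for all $(a_i) \in \ell_1^+$.

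Fix $b_1 \in (0,\, e^{-1/c})$. Assume inductively that positive $b_1,\dots,b_n$ with $t_n < 1$ have been chosen. The function $f_n\colon [0,\infty) \to [0,\infty)$ defined by $f_n(t) = \|\sum_{i=1}^n b_i y_i + t y_{n+1}\|$ is continuous with $f_n(0)=t_n$, and the reverse triangle inequality gives $f_n(t) \geq t - t_n \to \infty$. Since $b_n > 0$ and $t_n \geq c b_1 > 0$, the target value $(1+b_n)t_n$ strictly exceeds $f_n(0)$, so the intermediate value theorem yields some $b_{n+1} > 0$ with $f_n(b_{n+1}) = (1+b_n)t_n$, i.e.\ $t_{n+1} = (1+b_n)t_n$ as required by the lemma.

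It remains to propagate the bound $t_{n+1} < 1$. Unfolding the recursion gives $t_{n+1} = b_1 \prod_{i=1}^n (1+b_i) \leq b_1 \exp(B_n)$, where $B_n = \sum_{i=1}^n b_i$. The $c$-$\ell_1^+$ estimate gives $cB_n \leq t_n$, so the inductive hypothesis forces $B_n < 1/c$, whence $t_{n+1} \leq b_1 e^{B_n} < b_1 e^{1/c} < 1$ by the choice of $b_1$; the base case $t_1 = b_1 < 1$ is immediate. The only point that might look delicate is the apparent circularity in the induction step, but $t_{n+1}$ is in fact determined by $b_1,\dots,b_n$ alone, so the bound $B_n < 1/c$ is established before $b_{n+1}$ ever enters the picture, and any valid positive root of $f_n(\cdot) = (1+b_n)t_n$ may be taken; as a byproduct $(b_n) \in \ell_1$ with $\sum b_i \leq 1/c$.
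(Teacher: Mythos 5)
Your proposal is correct and follows essentially the same route as the paper: choose $b_1 < e^{-1/c}$, use the intermediate value theorem at each step, and control the partial-sum norms via $b_1\prod_{i=1}^n(1+b_i) \le b_1 e^{\sum_{i=1}^n b_i} \le b_1 e^{t_n/c} < b_1 e^{1/c} < 1$, where the $c$-$\ell_1^+$ estimate converts $\sum b_i$ into a norm bound. Your explicit remark that $t_{n+1}=(1+b_n)t_n$ is determined by $b_1,\dots,b_n$ alone, dissolving the apparent circularity, is a helpful clarification of the same argument.
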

\begin{proof}
Assume that \((y_n)\) is a \(c\)-\(\ell_1^+\) sequence for some \( 0 < c \leq 1\). 
We use induction on \(n \in \mathbb{N}\) to construct the desired scalar sequence \((b_n)\).
Choose \( 0 < b_1 < e^{-1/c}\). Then,
\[(1 + b_1) \| b_1 y_1 \| = b_1(1+b_1) < (1/e)(1 + 1/e) < 1\]
The intermediate value theorem now, applied on \(\phi_2 \colon [0, \infty) \to \mathbb{R}\) with \(\phi_2(t) = \|b_1 y_1 + t y_2\|\),
provides us some \(b_2 > 0\) such that \((1 + b_1) \| b_1 y_1 \| = \|b_1 y_1 + b_2 y_2 \| < 1\).
We next assume that \(n \geq 2\) and that the positive scalars \(b_1, \dots , b_n\) have been chosen to satisfy
\[ (1+b_k) \biggl \| \sum_{i=1}^k b_i y_i \biggr \| = \biggl \| \sum_{i=1}^{k+1} b_i y_i \biggr \| < 1, \, 
1 \leq k \leq n -1 \]
It follows now from our inductive assumption that
\[ (1+b_n) \biggl \| \sum_{i=1}^n b_i y_i \biggr \| = b_1 \prod_{i=1}^n (1+b_i) \leq b_1 e^{\sum_{i=1}^n b_i} \leq
b_1 e^{\frac{1}{c} \|\sum_{i=1}^n b_i y_i \|} \leq b_1 e^{\frac{1}{c}} < 1\]
We finally apply the intermediate value theorem for the function 
\(\phi_{n+1} (t) = \| \sum_{i=1}^n b_i y_i + t y_{n+1} \|\), \(t \geq 0\), to obtain \(b_{n+1} > 0\) such that
\[ (1+b_n) \biggl \| \sum_{i=1}^n b_i y_i \biggr \| = \biggl \| \sum_{i=1}^{n+1} b_i y_i \biggr \| < 1 \]
This completes the inductive step and the proof of the lemma. 
\end{proof}
The next result was obtained in \cite{S}. Here we present an alternative proof based on our previous lemma.
\begin{Cor} \label{C1}
Let \((y_n)\) be a bounded sequence in the Banach space \(Y\). Let \(F \subset B_{Y^*}\) be such that
\(\sum_n a_n y_n\) attains its norm at some element of \(F\) for all \((a_n) \in \ell_1^+\). Assume that
\(\lim_n y^* (y_n) = 0\) for all \(y^* \in F\). Then \((y_n)\) is weakly null.
\end{Cor}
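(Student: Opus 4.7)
The plan is to argue by contradiction. Suppose $(y_n)$ is not weakly null. Then some $\psi \in Y^*$, $\lambda \ne 0$ and a subsequence satisfy $\psi(y_{n_k}) \to \lambda$; after rotating $\psi$ and passing to a further subsequence, we may assume $\lambda > 0$ is real and $\|y_{n_k}\| \to \mu > 0$. Hence $z_k := y_{n_k}/\|y_{n_k}\|$ is not weakly null, so (as remarked just before Lemma \ref{L1}) a further subsequence, still called $(z_k)$, is a $c$-$\ell_1^+$ sequence for some $c > 0$.

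Apply Lemma \ref{L1} to $(z_k)$ to obtain positive $(b_k)$ with $(1+b_n)\|u_n\| = \|u_{n+1}\| < 1$, where $u_n := \sum_{i=1}^n b_i z_i$. Iterating gives $\|u_n\| = b_1 \prod_{i=1}^{n-1}(1+b_i)$, and the bound $\|u_n\| < 1$ forces $(b_k) \in \ell_1$ via $\sum_i \log(1+b_i) < \log(1/b_1)$. Thus $u := \sum_k b_k z_k$ exists in $Y$ and $\|u\| = b_1 \prod_{i=1}^\infty (1+b_i) \ge b_1 > 0$. Set $a_n := b_k/\|y_{n_k}\|$ when $n = n_k$ and $a_n := 0$ otherwise; since $\inf_k \|y_{n_k}\| > 0$, we have $(a_n) \in \ell_1^+$ and $u = \sum_n a_n y_n$. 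The hypothesis then supplies $y^* \in F$ with $|y^*(u)| = \|u\|$; after multiplying $y^*$ by a phase $\sigma \in \mathbb{T}$, I may arrange $y^*(u) = \|u\|$ to be real positive while retaining $y^*(y_n) \to 0$, so also $y^*(z_k) \to 0$.

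The contradiction comes from squeezing $\|u\| - \|u_n\|$ between two bounds. The elementary inequality $\prod_{k \ge n}(1+b_k) \ge 1 + \sum_{k \ge n} b_k$ together with $\|u\| = \|u_n\| \prod_{k \ge n}(1+b_k)$ yields
\[
\|u\| - \|u_n\| \;\ge\; \|u_n\| \sum_{k \ge n} b_k \;\ge\; b_1 \sum_{k \ge n} b_k,
\]
while $\operatorname{Re} y^*(u_n) \le \|u_n\|$ combined with $y^*(u) = \|u\|$ gives
\[
\|u\| - \|u_n\| \;\le\; \operatorname{Re} y^*\!\Big(\sum_{k > n} b_k z_k\Big) \;\le\; \sum_{k > n} b_k\,|y^*(z_k)| \;\le\; M_n \sum_{k > n} b_k,
\]
with $M_n := \sup_{k > n}|y^*(z_k)| \to 0$. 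Since $\sum_{k \ge n} b_k \ge \sum_{k > n} b_k > 0$, dividing yields $b_1 \le M_n$, absurd as $n \to \infty$.

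The delicate step is the first: the norm-attainment hypothesis is formulated for $(y_n)$, not for the normalized $\ell_1^+$-subsequence $(z_k)$ produced by Lemma \ref{L1}. The reweighting $a_{n_k} = b_k/\|y_{n_k}\|$ bridges the two, which is legitimate precisely because we secured $\inf_k \|y_{n_k}\| > 0$ during the extraction; everything else is then driven by the specific growth encoded in Lemma \ref{L1}.
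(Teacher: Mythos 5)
Your proof is correct and follows essentially the same route as the paper: reduce to a normalized $\ell_1^+$ subsequence (your explicit reweighting $a_{n_k}=b_k/\|y_{n_k}\|$ is just the paper's opening ``without loss of generality'' observation made concrete), apply Lemma \ref{L1} to form $u=\sum_k b_k z_k$, and show that no $y^*\in F$ can attain $\|u\|$. The only real difference is the concluding estimate: where the paper bounds $|y^*(u)|$ via $b/(1+b)\le\ln(1+b)$ and $\ln t\le t-1$, you squeeze $\|u\|-\|u_n\|$ between $b_1\sum_{k\ge n}b_k$ (from $\prod_{k\ge n}(1+b_k)\ge 1+\sum_{k\ge n}b_k$) and $M_n\sum_{k>n}b_k$, a slightly cleaner path to the same contradiction.
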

\begin{proof}
We first observe that the hypotheses of the corollary are also fulfilled by any sequence of the form
\((y_{k_n}/\|y_{k_n}\|)\) where \((y_{k_n})\) is any subsequence of \((y_n)\) which is bounded away from zero.
Assume that \((y_n)\) is not weakly null. Our observation allows us to assume, without loss of generality, that
\((y_n)\) is a normalized \(\ell_1^+\) sequence. Let \((b_n)\) be the scalar sequence resulting from Lemma \ref{L1}
applied on \((y_n)\). Set \(u_n = \sum_{i=1}^n b_i u_i \), \(n \in \mathbb{N}\). Then, \((1+b_n) \|u_n \| = \|u_{n+1}\| < 1\)
for all \(n \in \mathbb{N}\). It follows that \(u = \sum_n b_n u_n \) is a well-defined element of \(Y\) and that
\(\|u\| = \|u_n\| \prod_{i=n}^\infty (1 + b_i)\) for all \(n \in \mathbb{N}\). Moreover, \(\|u_n \| < \|u_{n+1} \| < \|u\|\)
for all \(n \in \mathbb{N}\). Our hypotheses yield that \(u\) attains its norm at some element of \(F\). However, let \(y^* \in F\)
be arbitrary and pick \(0 < \delta < b_1/2\). Choose \(m \in \mathbb{N}\) so that \(|y^*(y_n) | < \delta\) and
\(b_n < 1\) for all \(n > m\). It follows that if we let \(t_n = \|u\|/\|u_n\|\), \(n \in \mathbb{N}\), then \((t_n)\) is strictly decreasing to \(1\)
and \(2 \delta /\|u\| < 1/ t_n \). We are thus led to the following estimates
\begin{align}
&|y^*(u)| \leq \|u_m \| + \sum_{n=m+1}^\infty b_n |y^*(y_n)| \leq \|u_m\| + \delta \sum_{n=m+1}^\infty b_n \notag \\
&\leq \|u_m\| + 2 \delta \sum_{n=m+1}^\infty \frac{b_n}{1 + b_n} 
\leq \|u_m\| + 2 \delta \sum_{n=m+1}^\infty \ln{(1 + b_n)} \notag \\
&= \|u_m \| + 2 \delta \ln{ \biggl [ \prod_{n=m+1}^\infty (1 + b_n) \biggr ] } 
= \|u_m\| + 2 \delta \ln{ \biggl (\frac{\|u\|}{\|u_{m+1}\|} \biggr )} \notag \\
&=\frac{\|u\|}{t_m} + 2 \delta \ln{ (t_{m+1})} \leq \frac{\|u\|}{t_m} + 2 \delta (t_m - 1) \notag \\
&=\|u\| \biggl [ \frac{1}{t_m} + \frac{2 \delta}{\|u\|} (t_m -1) \biggr ] < \|u\| \biggl ( \frac{1}{t_m} + \frac{t_m -1}{t_m} \biggr ) = \|u\| \notag
\end{align}
contrary to our assumptions. Therefore, \((y_n)\) is indeed weakly null.
\end{proof}  
Evidently, the preceding corollary readily yields Simons's result \cite{S} that every bounded sequence \((x_n)\) in a Banach space \(X\) satisfying
\(\lim_n b^* (x_n) = 0\) for all \(b^* \in B\), where \(B\) is a boundary for \(X\), is weakly null (cf. also \cite{M}). The case were \(B\) is the
set of the extreme points of \(B_{X^*}\) is the well known Rainwater's theorem \cite{RW}.

The next lemma is due to R. C. James \cite{J1} (cf. also \cite{O}) but we include a proof to be thorough.
\begin{Lem} \label{ES}
Let \(X\) be a Banach space and \(K \subset X\). We naturally identify \(X\) with a closed subspace of \(X^{**}\)
and assume that
\(\overline{K}^{w^*} \setminus X \ne \emptyset\).
Then there exist a sequence \((x_n) \)
in \(K\), a bounded sequence \((x_n^*)\) in \(X^*\) and \(\delta > 0\) so that for all \(n \in \mathbb{N}\)
\[ |x_n^*(x_i) | < 1/n , \, \forall \, i < n, \text{ yet }, |x_i^*(x_n) | > \delta,  \, \forall \, i \leq n\]
\end{Lem}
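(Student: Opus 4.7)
The plan is to pick a witness $x^{**} \in \overline{K}^{w^*} \setminus X$ and leverage $d := d(x^{**}, X) > 0$, which is strictly positive because $X$ is norm-closed in $X^{**}$. Setting $\delta = d/4$, I would construct the two sequences alternately, in the order $x_1^*, x_1, x_2^*, x_2, \dots$.

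For the inductive step, suppose $x_1, \dots, x_{n-1} \in K$ and $x_1^*, \dots, x_{n-1}^* \in B_{X^*}$ have been chosen so that $|x^{**}(x_i^*)| > d/2$ for every $i \leq n-1$. To select $x_n^*$, let $Z = \mathrm{span}\{x_1, \dots, x_{n-1}\} \subset X$, which is finite-dimensional and hence $w^*$-closed in $X^{**}$, so that $Z^{\perp\perp} = Z$. Standard Hahn-Banach duality (applied to the finite-codimensional closed subspace $Z^\perp$ of $X^*$) gives an isometric identification $(Z^\perp)^* \cong X^{**}/Z^{\perp\perp} = X^{**}/Z$, which yields the distance formula
\[
d(x^{**}, Z) \;=\; \sup\bigl\{|x^{**}(x^*)| : x^* \in Z^\perp \cap B_{X^*}\bigr\}.
\]
Because $Z \subset X$, one has $d(x^{**}, Z) \geq d$, so one may pick $x_n^* \in Z^\perp \cap B_{X^*}$ with $|x^{**}(x_n^*)| > d/2$; the condition $x_n^* \in Z^\perp$ immediately forces $|x_n^*(x_i)| = 0 < 1/n$ for every $i < n$.

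To select $x_n$, use $x^{**} \in \overline{K}^{w^*}$: by the definition of the weak-$*$ topology, applied to the finite set $\{x_1^*, \dots, x_n^*\}$, there is $x_n \in K$ with $|x_i^*(x_n) - x^{**}(x_i^*)| < d/4$ for all $i \leq n$. The reverse triangle inequality then yields $|x_i^*(x_n)| > d/2 - d/4 = \delta$ for each $i \leq n$, closing the induction and producing bounded sequences $(x_n) \subset K$ and $(x_n^*) \subset B_{X^*}$ with the asserted properties.

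The principal subtlety, and likely the only real obstacle, is the distance formula displayed above: Hahn-Banach applied directly in $X^{**}$ supplies only a norm-one functional in $X^{***}$ realizing $d(x^{**}, Z)$, whereas the construction crucially requires such a functional inside $X^*$. Finite-dimensionality of $Z$ is precisely what permits the upgrade $Z^{\perp\perp} = Z$, and hence the realization of the distance by an element of $X^*$ itself; with this in hand, the rest of the argument is a routine alternating induction.
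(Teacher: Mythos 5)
Your argument is correct, and it reaches the conclusion by a route that differs in its mechanics from the paper's. The paper fixes, once and for all, a functional \(x^{***} \in X^{***}\) with \(x^{***}|_X = 0\) and \(|x^{***}(x^{**})| > \delta\) (Hahn--Banach applied to the closed subspace \(X\) of \(X^{**}\)), and then at each inductive step invokes Goldstine's theorem to approximate \(x^{***}\) at the finitely many points \(x^{**}, x_1, \dots, x_{n-1}\) by some \(x_n^* \in X^*\) with \(\|x_n^*\| \leq \|x^{***}\|\); this yields only the approximate vanishing \(|x_n^*(x_i)| < 1/n\). You instead work per step with the finite-dimensional subspace \(Z = \mathrm{span}\{x_1, \dots, x_{n-1}\}\) and the isometric identification \((Z^\perp)^* \cong X^{**}/Z^{\perp\perp} = X^{**}/Z\), which produces \(x_n^* \in Z^\perp \cap B_{X^*}\) with \(|x^{**}(x_n^*)| > d/2\); you get \emph{exact} vanishing \(x_n^*(x_i) = 0\) and functionals in the unit ball, at the cost of re-invoking the biannihilator duality at every step rather than a single global Hahn--Banach plus Goldstine. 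You correctly identify the crux --- realizing the distance \(d(x^{**}, Z)\) by an element of \(X^*\) rather than \(X^{***}\) --- and the finite-dimensionality of \(Z\) (hence \(Z^{\perp\perp} = Z\)) does legitimately supply this. The selection of \(x_n \in K\) via the \(w^*\)-neighborhood determined by \(\{x_1^*, \dots, x_n^*\}\) and the estimate \(|x_i^*(x_n)| > d/2 - d/4 = \delta\) matches the paper's step exactly. One trivial overstatement: \((x_n)\) need not be bounded unless \(K\) is, but the lemma only requires boundedness of \((x_n^*)\), which your construction gives since all \(x_n^*\) lie in \(B_{X^*}\).
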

\begin{proof}
Choose \(x^{**} \in \overline{K}^{w^*} \setminus X\). 
The Hahn-Banach theorem yields \(x^{***} \in X^{***}\)
and \(\delta > 0\) so that \(|x^{***} (x^{**})| > \delta\) and \(x^{***} | X = 0\). 
We use induction on \(n \in \mathbb{N}\) to construct the desired sequences \((x_n)\) and \((x_n^*)\).
We first apply Goldstine's
theorem to obtain \(x_1^* \in X^*\) with \(\|x_1^*\| \leq \|x^{***}\|\) so that
\(|x^{**} (x_1^*)| > \delta\). We then choose \(x_1 \in K\) so that \(|x_1^*(x_1)| > \delta\).

We next assume that \(n \geq 2\) and that \((x_i)_{i=1}^{n-1} \subset K\), 
\((x_i^*)_{i=1}^{n-1} \subset X^*\) have been chosen so that
\begin{enumerate}
\item \(\|x_i^* \| \leq \|x^{***}\|\), \(|x_i^*(x_j)| < 1/i\), \(\forall j < i\), \(\forall i \leq n-1\).
\item \(|x_j^*(x_i) | > \delta\), \(\forall \, j \leq i \leq n-1\).
\item \(|x^{**} (x_i^*) | > \delta\), \(\forall \, i \leq n-1\).
\end{enumerate}
Since \(x^{***}\) vanishes on \(X\) and \(|x^{***} (x^{**})| > \delta\), Goldstine's theorem gives us some
\(x_n^* \in X^*\) such that \(\|x_n^* \| \leq \|x^{***}\|\), \(|x^{**} (x_n) | > \delta\) and
\(|x_n^* (x_j) | < 1/n\) for all \(j < n\). Therefore, \(|x^{**} (x_i^*) | > \delta \), for all
\(i \leq n\). On the other hand, \(x^{**} \in \overline{K}^{w^*}\) and so we may choose \(x_n \in K\)
such that \(|x_i^*(x_n) | > \delta\), for all \(i \leq n\). It follows now that 
\((x_i)_{i=1}^n \) and \((x_i^*)_{i=1}^{n}\) satisfy \((1)-(3)\) for \(n\). 
The inductive step and the proof of the lemma are now complete.
\end{proof}
\begin{remark}
The preceding lemma yields the less direct implication of the Eberlein-Smulian theorem.
Indeed assume that \(K\) is a relatively weakly countably compact subset of a Banach space \(X\).
To prove that \(K\) is relatively weakly compact it suffices showing that \(\overline{K}^{w^*} \subset X\).
Assume the contrary and choose sequences \((x_n) \subset K\), \((x_n^*) \subset X^*\) and \(\delta > 0\)
according to Lemma \ref{ES}. Let \(x^* \in X^*\) be a \(w^*\)-cluster point of \((x_n^*)\) and let
\(x \in X\) be a weak-cluster point of \((x_n)\). It follows that \(x^*(x_i) = 0\) for all \(i \in \mathbb{N}\)
and thus \(x^*(x) = 0\). On the other hand, \(|x_i^*(x)| \geq \delta\) for all \(i \in \mathbb{N}\) whence
\(|x^*(x)| \geq \delta\), a contradiction. 
\end{remark}
\begin{Cor} \label{C2}
Let \(K\) be a bounded subset of the Banach space \(X\). We naturally
identify \(X\) with a closed subspace of \(X^{**}\) and set \(L = \overline{K}^{w^*}\). 
Let \(R \colon X^* \to C(L)\) be the natural restriction operator. If \(R\) is weakly compact
then \(K\) is relatively weakly compact.
\end{Cor}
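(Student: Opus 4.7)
The plan is to argue by contradiction using Lemma~\ref{ES}. Since $L$ is $w^*$-compact in $X^{**}$ and the $w^*$-topology of $X^{**}$ restricts to the weak topology on $X$, once $L \subset X$ is established it follows that $L$ is weakly compact in $X$, whence $K \subset L$ is relatively weakly compact. Thus it suffices to show $\overline{K}^{w^*} \setminus X = \emptyset$. Assume otherwise; then Lemma~\ref{ES} supplies sequences $(x_n) \subset K$ and $(x_n^*) \subset X^*$, with $(x_n^*)$ bounded, together with $\delta > 0$, satisfying $|x_n^*(x_i)| < 1/n$ for $i < n$ and $|x_i^*(x_n)| > \delta$ for $i \leq n$.

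Next I would bring in the hypothesis that $R$ is weakly compact. The image $(R(x_n^*))$ is a bounded sequence in $C(L)$ contained in a weakly relatively compact set, so the Eberlein--Smulian theorem yields a subsequence $(R(x_{n_k}^*))$ converging weakly in $C(L)$ to some $f$. Testing against the point-evaluation functionals $\delta_{x^{**}} \in C(L)^*$ gives $R(x_{n_k}^*)(x^{**}) = x^{**}(x_{n_k}^*) \to f(x^{**})$ for every $x^{**} \in L$. Applying this at $x^{**} = x_i$ and combining with the bound $|x_{n_k}^*(x_i)| < 1/n_k$ for $n_k > i$ shows $f(x_i) = 0$ for every $i$.

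The final step is to produce a single $x^{**} \in L$ that both annihilates $f$ and witnesses a uniform lower bound against the $x_i^*$. I would take $x^{**}$ to be a $w^*$-cluster point in $L$ of the bounded sequence $(x_n) \subset X \subset X^{**}$, whose existence is guaranteed by Banach--Alaoglu. On one hand, $f$ is $w^*$-continuous on $L$ and vanishes at every $x_n$, so $f(x^{**}) = 0$. On the other hand, for each fixed $i$ the scalar $x^{**}(x_i^*)$ is a cluster point of the numerical sequence $(x_i^*(x_n))_{n \geq i}$, each term of which has modulus strictly greater than $\delta$; hence $|x^{**}(x_i^*)| \geq \delta$, and in particular $|x_{n_k}^*(x^{**})| \geq \delta$ for every $k$, which contradicts $x_{n_k}^*(x^{**}) \to f(x^{**}) = 0$.

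The main obstacle is the double role played by $x^{**}$: it must lie in the $w^*$-closure of the $x_n$ (to force $f(x^{**})=0$ via continuity) while simultaneously inheriting the lower bound $\delta$ against \emph{every} functional $x_i^*$. Choosing a $w^*$-cluster point of the full sequence $(x_n)$, rather than treating each tail separately, is precisely what delivers both properties in a single element.
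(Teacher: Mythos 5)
Your proposal is correct and follows essentially the same route as the paper: reduce to showing $L \subset X$, invoke Lemma~\ref{ES} for the contradiction, use weak compactness of $R$ together with the Eberlein--Smulian theorem to extract a weak limit (the paper takes a weak cluster point, you take a weakly convergent subsequence, an immaterial difference), test against Dirac functionals, and play a $w^*$-cluster point of $(x_n)$ against the two inequalities from the lemma. All steps check out, including the justification that $f$, being $w^*$-continuous on $L$, vanishes at the cluster point $x^{**}$.
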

\begin{proof}
Note first that since \(K\) is bounded, \(L\)
is \(w^*\)-compact. We need only show that
\(L \subset X\). If that were not the case then Lemma \ref{ES} yields bounded sequences 
\((x_n) \subset K \),
\((x_n^*) \subset X^*\) and \(\delta > 0\) so that for all \(n \in \mathbb{N}\)
\[ |x_n^*(x_i) | < 1/n , \, \forall \, i < n, \text{ yet }, |x_i^*(x_n) | > \delta,  \, \forall \, i \leq n\]
Let \(f_n = R(x_n^*) \) for all \(n \in \mathbb{N}\).
\(R\) is weakly compact and so \((f_n)\) admits a weak-cluster point \(f \in C(L)\), thanks to the Eberlein-Smulian theorem.
It follows, by the first inequality above, that \(f(x_i) = 0\) for all \(i \in \mathbb{N}\). On the other hand, let
\(x^{**} \in L\) be a \(w^*\)-cluster point of \((x_n)\). Then \(f(x^{**}) = 0\). However, the second inequality above
yields \(|f_i(x^{**})| \geq \delta\) for all \(i \in \mathbb{N}\). Hence, \(|f(x^{**})| \geq \delta\) which is a contradiction.
\end{proof}
The next corollary is a special case of James's compactness theorem (cf. also \cite{M}).
\begin{Cor} \label{Jsep}
Let \(X\) be a Banach space whose dual ball is \(w^*\)-sequentially compact. Let \(K \subset X\) be bounded
with the property that every \(x^* \in X^*\) attains its supremum on \(K\). Then \(K\) is relatively weakly compact.
\end{Cor}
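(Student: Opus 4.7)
The plan is to reduce matters to Corollary~\ref{C2} by proving that the natural restriction operator $R\colon X^* \to C(L)$, where $L = \overline{K}^{w^*} \subset X^{**}$ is the $w^*$-compact hull of $K$, is weakly compact. Once that is in hand, Corollary~\ref{C2} yields the conclusion directly.

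By the Eberlein-Smulian theorem in $C(L)$, to establish the weak compactness of $R$ it suffices to extract a weakly convergent subsequence of $\bigl(R(x_n^*)\bigr)$ from an arbitrary sequence $(x_n^*) \subset B_{X^*}$. Using that $B_{X^*}$ is $w^*$-sequentially compact, a first passage to a subsequence produces $x_n^* \xrightarrow{w^*} x^*$ for some $x^* \in B_{X^*}$. Writing $y_n^* = x_n^* - x^*$ and $g_n = R(y_n^*) = y_n^*|_L \in C(L)$, the task reduces to showing that $(g_n)$ is weakly null in $C(L)$.

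I would obtain this weak nullity from Corollary~\ref{C1} applied to the bounded sequence $(g_n)$ in the Banach space $Y = C(L)$, together with the subset $F = \{\delta_x : x \in K\} \subset B_{C(L)^*}$ of point-evaluations at elements of $K \subset L$. Condition (b) of that corollary, namely $\delta_x(g_n) = y_n^*(x) \to 0$ for each $x \in K$, is immediate from $y_n^* \xrightarrow{w^*} 0$ in $X^*$ and $K \subset X$. The substantive step, and the main obstacle, is condition (a): for every $(a_n) \in \ell_1^+$ the vector $\sum_n a_n g_n$ must attain its $C(L)$-norm at some $\delta_x$ with $x \in K$.

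To verify (a) I would exploit that $z^* := \sum_n a_n y_n^*$ lies in $X^*$, so $x^{**} \mapsto x^{**}(z^*)$ is $w^*$-continuous on $X^{**}$; hence
\[ \bigl\| \sum\nolimits_n a_n g_n \bigr\|_{C(L)} = \sup_{x^{**} \in L} |x^{**}(z^*)| = \sup_{x \in K} |z^*(x)|, \]
where the last equality uses that $L$ is the $w^*$-closure of $K$. Applying the James hypothesis to both $z^*$ and $-z^*$ produces $x_0 \in K$ at which $|z^*|$ attains its supremum over $K$, and this $x_0$ witnesses (a). Corollary~\ref{C1} then delivers that $(g_n)$ is weakly null in $C(L)$, so $R(x_n^*) \to R(x^*)$ weakly, completing the argument.
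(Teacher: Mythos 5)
Your proposal is correct and follows essentially the same route as the paper: reduce to Corollary \ref{C2}, use $w^*$-sequential compactness of $B_{X^*}$ to pass to a $w^*$-convergent subsequence, and apply Corollary \ref{C1} with $F$ the point evaluations at elements of $K$, noting that the sup over $L$ equals the sup over $K$ and is attained by the James hypothesis. Your extra remarks (the $w^*$-continuity of evaluation at $z^*\in X^*$, and applying the hypothesis to $\pm z^*$ to attain $|z^*|$) merely spell out steps the paper leaves implicit.
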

\begin{proof}
\(X\) is naturally identified with a closed subspace of \(X^{**}\). Set \(L = \overline{K}^{w^*}\). Since \(K\) is bounded, \(L\)
is \(w^*\)-compact. Let \(Y = C(L)\), endowed with the supremum norm and consider the natural restriction operator
\(R \colon X^* \to Y\). 
Corollary \ref{C2} will lead the assertion provided we show that \(R\) is weakly compact. To this end,
let \((x_n^*)\) be a bounded sequence in \(X^*\).
Since \(X\) has a \(w^*\)-sequentially compact dual ball we may assume, without loss of generality, that \((x_n^*)\) is \(w^*\)-convergent to some \(x^* \in X^*\).

We set \(f = R(x^*) \) and \(f_n = R(x_n^*)\) for all \(n \in \mathbb{N}\). It follows that
\((f_n - f)\) is a bounded sequence in \(Y\) satisfying \(\lim_n f_n(t) = f(t)\) for all \(t \in K\). Note also that for all \((a_n) \in \ell_1^+\), \(\|\sum_n a_n (f_n - f)\| = 
\sup_{x^{**} \in L} |\sum_n a_n x^{**}(x_n^* - x^*)| = 
\sup_{x \in K} |\sum_n a_n (x_n^* - x^*)(x)| \)
and the latter is attained at an element of \(K\) thanks to our hypothesis. We infer from Corollary \ref{C1}
that \((f_n)\) is weakly convergent to \(f\) and therefore \(R\) is indeed weakly compact by the Eberlein-Smulian theorem.
\end{proof}
\section{Proof of the main result} \label{S2}
\begin{Lem} \label{L2}
Let \(Y\) be a Banach space and \((y_n)\) a normalized sequence in \(Y\) generating \(\ell_1^+\) almost isometrically.
Let \(0 < \delta_0 < 1\). There exist positive integers \(m_0\), \(n_0\) and a sequence of positive scalars \((b_n)\)
so that letting \(z_n = y_{m_0 + n}\), \(n \in \mathbb{N}\), the following hold:
\begin{enumerate}
\item \(\| \sum_{n=1}^{n_0} b_n z_n \| = \delta_0\)
\item \((1+b_n) \|\sum_{i=1}^n b_i z_i \| = \|\sum_{i=1}^{n+1} b_i z_i \| < 1, \,  \forall \, n \geq n_0\)
\item \([\prod_{i=n}^\infty (1 + b_i) ] \|\sum_{i=1}^n b_i z_i \| = \|\sum_{i=1}^\infty b_i z_i \| \leq 1, \, \forall \, n \geq n_0\)
\end{enumerate}
\end{Lem}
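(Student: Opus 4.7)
The plan is to follow the strategy of Lemma \ref{L1}, but preceded by a Phase 1 that brings the norm of the initial partial sum to exactly \(\delta_0\), after which the Lemma \ref{L1}-style recursion continues from index \(n_0\). Writing \(u_n = \sum_{i=1}^n b_i z_i\), the key analytic fact is that \(\delta_0 - \ln \delta_0 > 1\) for all \(\delta_0 \in (0,1)\), since \(\delta \mapsto \delta - \ln\delta\) is strictly decreasing on \((0,1)\) with value \(1\) at \(\delta = 1\). This lets me pick \(\epsilon > 0\) so small that, writing \(c = (1+\epsilon)^{-1}\), the interval \((1/c + \ln\delta_0, \delta_0)\) is non-empty. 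Since \((y_n)\) generates \(\ell_1^+\) almost isometrically, I choose \(m_0 \in \mathbb{N}\) so that \((y_n)_{n > m_0}\) is a \(c\)-\(\ell_1^+\) sequence and set \(z_n = y_{m_0 + n}\).

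For Phase 1, I pick an integer \(k \geq 1\) and \(\eta > 0\) with \(S := k\eta\) lying in the above interval, set \(n_0 = k + 1\), and put \(b_1 = \cdots = b_k = \eta\). The triangle inequality yields \(\|u_{n_0-1}\| \leq S < \delta_0\), so the intermediate value theorem applied to \(\phi_{n_0}(t) = \|u_{n_0-1} + t z_{n_0}\|\) provides \(b_{n_0} > 0\) with \(\|u_{n_0}\| = \delta_0\), giving (1).

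For Phase 2, inductively for \(n \geq n_0\): given \(\|u_n\| < 1\) and \(b_n > 0\), the intermediate value theorem applied to \(t \mapsto \|u_n + t z_{n+1}\|\) produces \(b_{n+1} > 0\) with \(\|u_{n+1}\| = (1+b_n)\|u_n\|\). To close the induction (base case \(\|u_{n_0}\| = \delta_0 < 1\)), observe
\[
\|u_{n+1}\| = \delta_0 \prod_{i=n_0}^n (1+b_i) \leq \delta_0 \, e^{\sum_{i=n_0}^n b_i},
\]
and use the \(c\)-\(\ell_1^+\) bound to get
\[
\sum_{i=n_0}^n b_i = \sum_{i=1}^n b_i - S \leq \frac{\|u_n\|}{c} - S < \frac{1}{c} - S < -\ln\delta_0,
\]
whence \(\|u_{n+1}\| < 1\), establishing (2). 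For (3), \(\sum_i b_i \leq 1/c\) together with \(\|z_i\| = 1\) yields norm convergence of \(\sum b_i z_i\) to some \(u \in Y\); iterating (2) gives \(\|u_N\| = \|u_n\| \prod_{i=n}^{N-1}(1+b_i)\) for \(n_0 \leq n \leq N\), and letting \(N \to \infty\) produces \(\|u\| = \|u_n\| \prod_{i=n}^\infty (1+b_i) \leq 1\), since \(\|u_N\| < 1\) for all \(N > n_0\).

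The main obstacle is the joint constraint on the Phase 1 ``budget'' \(S\): it must satisfy \(S < \delta_0\) so that Phase 1 can hit \(\delta_0\) via the intermediate value theorem, while simultaneously \(S > 1/c + \ln\delta_0\) so that the residual \(c\)-\(\ell_1^+\) budget available to Phase 2 stays under \(-\ln\delta_0\) and the multiplicative recursion is trapped below \(1\). Non-emptiness of \((1/c + \ln\delta_0, \delta_0)\) is exactly what the analytic inequality \(\delta_0 - \ln\delta_0 > 1\), combined with the freedom to take \(c\) close to \(1\), provides.
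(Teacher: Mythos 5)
Your proof is correct and follows essentially the same route as the paper's: pass to a \((1+\epsilon)^{-1}\)-\(\ell_1^+\) tail, build an initial block of norm exactly \(\delta_0\), and then run the Lemma \ref{L1} recursion, with the inequality \(\delta_0 e^{1-\delta_0}<1\) (equivalently \(\delta_0-\ln\delta_0>1\)) trapping the partial sums below \(1\). The only differences are cosmetic: the paper hits \(\delta_0\) in Phase 1 by scaling an average of \(n_0\) terms (using the almost-isometric lower bound on \(\|v_0\|\)) rather than by an extra intermediate-value step, and it controls the tail budget through \(\bigl\|\sum_{i=n_0}^{n} b_i z_i\bigr\|\) via the triangle inequality rather than through \(\sum_{i=n_0}^{n} b_i\) directly.
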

\begin{proof}
We first choose \(0 < \epsilon < 1 - \delta_0\) such that \(\delta_0 e^{(1 + \epsilon)(1 + 2 \epsilon - \delta_0)} < 1\).
This is possible because \(\delta_0 e^{1-\delta_0} < 1\). Since \((y_n)\) generates \(\ell_1^+\) almost isometrically, there exists
\(m_0 \in \mathbb{N}\) such that
\[\biggl \| \sum_{i \geq m_0} a_i y_i \biggr \| \geq (1+ \epsilon)^{-1} \sum_{i \geq m_0} a_i , \, \forall \, (a_i)_{i \geq m_0} \in \ell_1^+\]
We next choose \(n_0 \in \mathbb{N}\) such that \( 1 + \epsilon < n_0 \epsilon\) and set \(v_0 = \frac{1}{n_0} \sum_{i=1}^{n_0} z_i\).
Note that \(\|v_0\| \geq (1 + \epsilon)^{-1}\). Define \(b_i = \frac{\delta_0}{n_0 \|v_0\|}\), \(i \leq n_0\), and
\(u_{n_0} = \sum_{i=1}^{n_0} b_i z_i\). Then \(\|u_{n_0}\| = \delta_0\) and \(0 < b_i < \epsilon\), \(i \leq n_0\). Observe that
\[ (1+ b_{n_0}) \|u_{n_0}\| \leq ( 1 + \epsilon) \delta_0 < e^{\epsilon} \delta_0 < \delta_0 e^{1 - \delta_0} < 1\]
We now apply the intermediate value theorem in a manner similar to that in the proof of Lemma \ref{L1} to obtain \(b_{n_0 + 1} > 0\)
such that 
\[(1+ b_{n_0}) \|u_{n_0}\| = \| u_{n_0} + b_{n_0 + 1} z_{n_0 +1} \| < 1\]
We next assume that \(n \geq n_0 +1\) and that we have constructed positive scalars \((b_i)_{i=n_0 + 1}^n \) satisfying
\[(1+b_k) \biggl \|\sum_{i=1}^k b_i z_i \biggr \| = \biggl \|\sum_{i=1}^{k+1} b_i z_i  \biggr \| < 1, \, n_0 \leq k \leq n-1\]
Then,
\begin{align}
(1+b_n) \biggl \|\sum_{i=1}^n b_i z_i \biggr \| &= \biggl [ \prod_{i=n_0}^n ( 1 + b_i) \biggr ] \biggl \|\sum_{i=1}^{n_0} b_i z_i \biggr \|
\leq \delta_0 e^{\sum_{i=n_0}^n b_i} \notag \\
&\leq \delta_0 e^{(1 + \epsilon) \|\sum_{i=n_0}^n b_i z_i \|} \notag
\end{align}
We now observe that
\[\biggl \|\sum_{i=1}^{n_0 -1 } b_i z_i \biggr \| + \biggl \|\sum_{i=n_0}^n b_i z_i \biggr \| \leq \sum_{i=1}^n b_i 
\leq (1 + \epsilon) \biggl \|\sum_{i=1}^n b_i z_i \biggr \| \leq 1 + \epsilon\]
and that 
\[ \biggl \|\sum_{i=1}^{n_0 - 1}  b_i z_i  \biggr \| \geq \biggl \|\sum_{i=1}^{n_0} b_i z_i \biggr \| - b_{n_0} \geq \delta_0 - \epsilon\]
Hence, 
\[\|\sum_{i=n_0}^n b_i z_i \| \leq 1 + \epsilon - (\delta_0 - \epsilon) = 1 + 2 \epsilon - \delta_0\]
We conclude that
\[(1+b_n) \biggl \|\sum_{i=1}^n b_i z_i \biggr \| \leq \delta_0 e^{(1+\epsilon)(1 + 2 \epsilon - \delta_0)} < 1\]
Once again, the intermediate value theorem yields some \(b_{n+1} > 0\) so that
\[(1+b_n) \biggl \|\sum_{i=1}^{n} b_i z_i \biggr \| = \biggl \|\sum_{i=1}^{n+1} b_i z_i  \biggr \| < 1\]
We have thus inductively constructed a sequence of positive scalars \((b_n)\) satisfying \((1)\) and \((2)\).
It follows that \(\sum_n b_n\) is a convergent series and hence \((3)\) is an immediate consequence of \((2)\).
\end{proof}
\begin{Cor} \label{C3}
Let \((y_n)\) be a normalized sequence in the Banach space \(Y\) generating \(\ell_1^+\)
almost isometrically.
Let \(F \subset B_{Y^*}\) be such that
\(\sum_n a_n y_n\) attains its norm at some element of \(F\) for all \((a_n) \in \ell_1^+\). Then for all 
\( 0 < \delta < 1\) there exists \(y^* \in F\) such that
\(\limsup_n |y^* (y_n)| \geq \delta\). Moreover, if every subsequence of \((y_n)\) admits a 
\(\tau_F\)-cluster point which attains its norm at some element of \(F\), then there exists  
\(y^* \in F\) such that
\(\limsup_n |y^* (y_n)| = 1\).
\end{Cor}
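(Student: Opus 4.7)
The first assertion will be established by contradiction, paralleling the proof of Corollary~\ref{C1} but using Lemma~\ref{L2} in place of Lemma~\ref{L1}. Given \(0<\delta<1\), assume that \(\limsup_n|y^*(y_n)|<\delta\) for every \(y^*\in F\), and fix any \(\delta_0\in(\delta,1)\). Apply Lemma~\ref{L2} to obtain \(m_0,n_0\), scalars \((b_n)\), and \(z_n=y_{m_0+n}\) satisfying (1)--(3). Set \(u_n=\sum_{i=1}^n b_iz_i\) and \(u=\sum_{i=1}^\infty b_iz_i\); pick \(y^*\in F\) with \(\|u\|=|y^*(u)|\), and let \(\alpha=\limsup_n|y^*(z_n)|<\delta\). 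For any \(\epsilon'>0\), choose \(m\geq n_0\) so large that \(|y^*(z_n)|<\alpha+\epsilon'\) and \(b_n<\epsilon'\) for \(n>m\). The inequality \(\ln(1+b_n)\geq b_n/(1+b_n)\geq b_n/(1+\epsilon')\) together with property (3) of Lemma~\ref{L2}, which identifies \(\prod_{n>m}(1+b_n)\) with \(\|u\|/\|u_{m+1}\|\), then yields
\[\|u\|\leq\|u_m\|+(\alpha+\epsilon')(1+\epsilon')\bigl(\|u\|/\|u_{m+1}\|-1\bigr).\]
Using \(\|u_m\|\leq\|u_{m+1}\|<\|u\|\), this rearranges to \(\|u_{m+1}\|\leq(\alpha+\epsilon')(1+\epsilon')\). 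Letting \(m\to\infty\) (so \(\|u_{m+1}\|\uparrow\|u\|\)) and then \(\epsilon'\to 0\) gives \(\|u\|\leq\alpha<\delta<\delta_0=\|u_{n_0}\|\leq\|u\|\), a contradiction.

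For the second assertion the plan is to iterate the first part along a diagonal. Set \((y_n^{(0)})=(y_n)\). At step \(k\geq 1\), apply the first part to \((y_n^{(k-1)})\) (which still generates \(\ell_1^+\) almost isometrically and inherits the norm-attainment hypothesis) to obtain \(y_k^*\in F\) with \(\limsup_n|y_k^*(y_n^{(k-1)})|\geq 1-1/k\); then refine to a subsequence \((y_n^{(k)})\) of \((y_n^{(k-1)})\) on which \(|y_k^*(y_n^{(k)})|\geq 1-2/k\) for all \(n\). Letting \(z_k=y_k^{(k)}\), the standard diagonal bookkeeping ensures \(|y_K^*(z_k)|\geq 1-2/K\) whenever \(k\geq K\). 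The supplementary hypothesis provides a \(\tau_F\)-cluster point \(y_0\) of \((z_k)\) which attains its norm at some \(y^*\in F\); fix a subnet \((z_{\sigma(\gamma)})\) with \(\tau_F\)-limit \(y_0\). Because such a subnet is eventually past every tail of \(\mathbb{N}\), \(|y_K^*(y_0)|=\lim_\gamma|y_K^*(z_{\sigma(\gamma)})|\geq 1-2/K\) for every \(K\), so \(\|y_0\|\geq 1\); conversely \(\|y_0\|=|y^*(y_0)|=\lim_\gamma|y^*(z_{\sigma(\gamma)})|\leq 1\). Thus \(|y^*(y_0)|=1\), and the subnet convergence forces \(\limsup_k|y^*(z_k)|=1\); since \((z_k)\) is a subsequence of \((y_n)\) and \(|y^*(y_n)|\leq 1\) for all \(n\), this gives \(\limsup_n|y^*(y_n)|=1\).

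The principal obstacle lies in the first part: one must exploit \(b_n\to 0\) to convert \(\sum_{n>m}b_n\) into \(\ln(\|u\|/\|u_{m+1}\|)\) via property (3) of Lemma~\ref{L2}, and must handle a single infinitesimal \(\epsilon'\) simultaneously controlling the tail values of \(|y^*(z_n)|\) and the logarithmic approximation \(b_n\approx\ln(1+b_n)\), so that the final estimate \((\alpha+\epsilon')(1+\epsilon')\) remains below \(\delta_0\) in the limit. Once this quantitative estimate is in place, the second part reduces to a routine diagonal-plus-subnet argument whose only subtle point is the standard fact that a subnet of a sequence is eventually beyond every initial segment, which allows the lower bounds \(|y_K^*(z_k)|\geq 1-2/K\) to transfer to \(|y_K^*(y_0)|\geq 1-2/K\).
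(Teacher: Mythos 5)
Your proposal is correct and follows essentially the same route as the paper: the first part uses Lemma \ref{L2} together with the estimates \(b_n/(1+b_n)\leq\ln(1+b_n)\) and \(\ln t\leq t-1\) applied to the telescoping product \(\prod_{i>m}(1+b_i)=\|u\|/\|u_{m+1}\|\) (you merely fix the norming functional and extract the quantitative contradiction \(\|u\|\leq\alpha<\delta_0=\|u_{n_0}\|\), where the paper shows \(|y^*(u)|<\|u\|\) for every \(y^*\in F\)), and the second part is the same iterate-and-diagonalize argument with a norm-attaining \(\tau_F\)-cluster point, presented directly rather than by contradiction. No gaps.
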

\begin{proof}
Assume, to the contrary, that for some \(0 < \delta < 1\) we had that
\(\limsup_n |y^*(y_n)|\) \(< \delta\) for all \(y^* \in F\).
Choose \(0 < \delta_0 < 1\) and \(\epsilon_0 > 0\) so that \(\delta_0 > \delta ( 1 + \epsilon_0) \).
Apply Lemma \ref{L2} to find \(n_0 \in \mathbb{N}\), a sequence of positive scalars \((b_n)\)
and a subsequence \((z_n)\) of \((y_n)\) so that 
\(\|\sum_{i=1}^{n_0} b_i z_i \| = \delta_0\) and
\[(1+b_n) \biggl \|\sum_{i=1}^{n} b_i z_i \biggr \| = \biggl \|\sum_{i=1}^{n+1} b_i z_i  \biggr \| < 1,
\, \forall \, n \geq n_0\]
We set \(u_n = \sum_{i=1}^n b_i z_i \), for all \(n \in \mathbb{N}\),
and \(u = \sum_n b_n z_n\). Then
\[(1 + b_n) \|u_n \| = \|u_{n+1}\| \text{ and } 
\biggl [\prod_{i=n}^\infty (1 + b_i) \biggr ] \|u_n\| = \|u \|, \forall \, n \geq n_0\] 
In particular, \(\|u_n\| < \|u_{n+1}\| < \|u\|\), for all \(n \geq n_0\). 
So if we let
\(t_n = \|u\|/\|u_n\|\) we obtain that \((t_n)_{n \geq n_0}\) is strictly decreasing to \(1\).

Let \(y^* \in F\) be arbitrary and choose \(m > n_0\) so that \(|y^*(z_n)| < \delta\)
and \(b_n < \epsilon_0\) for all \(n > m\). 
We now have the following estimates
\begin{align}
&|y^*(u)| \leq \|u_m \| + \sum_{n=m+1}^\infty b_n |y^*(z_n)| \leq \|u_m\| + \delta \sum_{n=m+1}^\infty b_n \notag \\
&\leq \|u_m\| + \delta (1 + \epsilon_0) \sum_{n=m+1}^\infty \frac{b_n}{1 + \epsilon_0} 
\leq \|u_m\| + \delta (1 + \epsilon_0) \sum_{n=m+1}^\infty \frac{b_n}{1 + b_n} \notag \\
&\leq \|u_m\| +
\delta ( 1 + \epsilon_0) \sum_{n=m+1}^\infty \ln{(1 + b_n)} \notag \\
&= \|u_m \| + \delta (1 + \epsilon_0) \ln{ \biggl [ \prod_{n=m+1}^\infty (1 + b_n) \biggr ] } 
= \|u_m\| + \delta (1 + \epsilon_0) \ln{ \biggl (\frac{\|u\|}{\|u_{m+1}\|} \biggr )} \notag \\
&=\frac{\|u\|}{t_m} + \delta (1 + \epsilon_0) \ln{ (t_{m+1})} \leq \frac{\|u\|}{t_m} + 
\delta (1 + \epsilon_0 )(t_m - 1) \notag \\
&=\|u\| \biggl [ \frac{1}{t_m} + \frac{ \delta (1 + \epsilon_0)}{\|u\|} (t_m -1) \biggr ] 
< \|u\| \biggl [ \frac{1}{t_m} + \frac{ \delta_0 }{\|u\|} (t_m -1) \biggr ] \notag \\
&= \|u\| \biggl [ \frac{1}{t_m} + \frac{ \|u_{n_0}\|}{\|u\|} (t_m -1) \biggr ] 
< \|u\| \biggl [ \frac{1}{t_m} + \frac{ \|u_{m}\|}{\|u\|} (t_m -1) \biggr ] \notag \\ 
&= \|u\| \biggl ( \frac{1}{t_m} + \frac{t_m -1}{t_m} \biggr ) = \|u\| \notag
\end{align}
Therefore, \(|y^*(u)| < \|u\|\) for all \(y^* \in F\) contradicting our hypothesis that
\(u\) attains its norm at some element of \(F\).

For the moreover assertion, let us suppose that \(\limsup_n |y^*(y_n)| < 1\)
for all \(y^* \in F\). It follows that \(|y^*(y) | < 1\) for all \(y^* \in F\)
and every \(\tau_F\)-cluster point \(y\) of \((y_n)\). We deduce from this and
our hypothesis that \(\|y\| < 1\) for every \(\tau_F\)-cluster point \(y\) of \((y_n)\)
that attains its norm at some element of \(F\). Successive applications of the first part of this corollary
now yield a nested sequence \(M_1 \supset M_2 \supset \cdots \) of infinite subsets of \(\mathbb{N}\)
and a sequence \((y_n^*) \subset F\) so that
\begin{equation} \label{E1}
|y_n^*(y_i)| > 1 - \frac{1}{n}, \, \forall \, i \in M_n, \, \forall \, n \in \mathbb{N}
\end{equation}
We next choose integers \(m_1 < m_2 < \cdots \) with \(m_n \in M_n\) for all \(n \in \mathbb{N}\).
Our hypothesis now yields a \(\tau_F\)-cluster point \(y_0\) of \((y_{m_n})\) which attains its
norm at some element of \(F\). Therefore, \(\|y_0\| < 1\) by our comments in the beginning of this paragraph. 
However, \eqref{E1} implies that \(\|y_0\| \geq 1\). This contradiction completes the proof of the lemma.
\end{proof}
\begin{Def}
Let \(Y\) be a Banach space and \(F \subset B_{Y^*}\). A subset \(K\) of \(Y\) is said to be
\(F\)-admissible if \(K\) is bounded, \(\tau_F\)-compact and whenever \((z_n) \subset K\) and
\((a_n) \in \ell_1\) then \(\sum_n a_n z_n\) attains its norm at some element of \(F\).
\end{Def}
\begin{Lem} \label{L3}
Let \(Y\) be a Banach space, \(F \subset B_{Y^*}\) and let \(K \subset Y\) be
\(F\)-admissible. Let \((y_n) \subset K\) and let \((I_n)\) be a sequence of finite subsets of \(\mathbb{N}\)
of the same cardinality. Let \((\lambda_n)\) be a bounded sequence of scalars and set \(u_n = \sum_{i \in I_n} \lambda_i y_i\)
for all \(n \in \mathbb{N}\). Assume that \((u_n)\) is normalized and generates \(\ell_1^+\) almost isometrically. Then there exists
\(y^* \in F\) such that \(\limsup_n |y^*(u_n)| =1\).
\end{Lem}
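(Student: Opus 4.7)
The plan is to apply the moreover assertion of Corollary \ref{C3} directly to the sequence $(u_n)$ itself. Since $(u_n)$ is already normalized and generates $\ell_1^+$ almost isometrically by hypothesis, the two remaining hypotheses of that moreover assertion must be established: that $\sum_n a_n u_n$ attains its norm at some element of $F$ for every $(a_n) \in \ell_1^+$, and that every subsequence of $(u_n)$ admits a $\tau_F$-cluster point which attains its norm at some element of $F$. I write $m$ for the common cardinality of the $I_n$'s and $C = \sup_n |\lambda_n| < \infty$.

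The first hypothesis is immediate from $F$-admissibility. For $(a_n) \in \ell_1^+$, reindexing yields
\[\sum_n a_n u_n = \sum_{i \in J} c_i y_i, \qquad c_i = \lambda_i \sum_{n : \, i \in I_n} a_n,\]
where $J = \bigcup_n I_n$. The estimate $\sum_{i \in J} |c_i| \le mC \sum_n a_n$ shows that $(c_i)_{i \in J} \in \ell_1$, so the right-hand side is an $\ell_1$-combination of elements of $K$, and the $F$-admissibility of $K$ provides the required norm attainment.

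For the second hypothesis, fix an arbitrary subsequence $(u_{n_k})$. The $\Delta$-system lemma applied to the family $(I_{n_k})$ of sets of common cardinality $m$ yields, after passing to a further subsequence which I still denote $(u_{n_k})$, a root $R$ such that $I_{n_k} \cap I_{n_{k'}} = R$ whenever $k \ne k'$. I decompose $u_{n_k} = v + w_k$ with
\[v = \sum_{i \in R} \lambda_i y_i, \qquad w_k = \sum_{i \in I_{n_k} \setminus R} \lambda_i y_i, \qquad m' = m - |R|,\]
so that the $w_k$'s are sums of exactly $m'$ terms $\lambda_i y_i$ with $y_i \in K$ and $|\lambda_i| \le C$. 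Letting $D$ denote the closed scalar disk of radius $C$, the map
\[\Phi : K^{m'} \times D^{m'} \to (Y, \tau_F), \qquad \Phi(z_1, \ldots, z_{m'}, \mu_1, \ldots, \mu_{m'}) = \sum_{l=1}^{m'} \mu_l z_l\]
is continuous with respect to the product topology on its domain (using $\tau_F$ on each copy of $K$), since each $y^* \in F$ is $\tau_F$-continuous and linear. Because $K$ is $\tau_F$-compact, Tychonoff makes the domain compact, so the image $L = \Phi(K^{m'} \times D^{m'})$ is $\tau_F$-compact. Since $(w_k) \subset L$, this sequence admits a $\tau_F$-cluster point $w^* \in L$, necessarily of the form $\sum_l \mu_l z_l$ with $z_l \in K$. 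Then $u^* = v + w^*$ is a $\tau_F$-cluster point of $(u_{n_k})$, and being a finite linear combination of elements of $K$, attains its norm at some element of $F$ by $F$-admissibility.

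With both hypotheses verified, the moreover part of Corollary \ref{C3} delivers $y^* \in F$ with $\limsup_n |y^*(u_n)| = 1$, completing the proof. The principal technical hurdle is the cluster-point hypothesis; its resolution depends on combining the $\Delta$-system reduction, which separates each $u_{n_k}$ into a fixed part and disjointly supported petals, with Tychonoff compactness applied to $K^{m'} \times D^{m'}$, in order to identify the $\tau_F$-cluster point as an honest finite combination of elements of $K$ on which $F$-admissibility can be invoked.
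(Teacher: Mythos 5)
Your proof is correct and follows essentially the same route as the paper's: reduce to the moreover part of Corollary \ref{C3}, verify the \(\ell_1^+\) norm-attainment by regrouping coefficients into an \(\ell_1\)-combination of elements of \(K\), and produce the \(\tau_F\)-cluster point as a finite linear combination of elements of \(K\) via Tychonoff compactness of a finite power of \(K\), to which \(F\)-admissibility applies. The only (harmless) difference is that the paper dispenses with the \(\Delta\)-system reduction, instead passing to a subsequence along which the coefficient tuples \((\lambda_{m(n,i)})_{i=1}^d\) converge and taking a cluster point of the corresponding vector tuples in \(K^d\); your device of adjoining the scalar disk \(D^{m'}\) to the compact product accomplishes the same thing.
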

\begin{proof}
It is not hard to see that the \(F\)-admissibility of \(K\) implies that 
\(\sum_n a_n u_n\) attains its norm at an element of \(F\) for every \((a_n) \in \ell_1\).
Therefore, in view of Corollary \ref{C3}, it suffices showing that every subsequence of \((u_n)\) admits a \(\tau_F\)-cluster point which attains its norm
at some element of \(F\). Note also that our assumptions on \((u_n)\) are also satisfied by any of its subsequences. Thus, we need only establish our assertion
for \((u_n)\) solely. To this end, let \(d \in \mathbb{N}\) be such that \(|I_n| = d\) for all \(n \in \mathbb{N}\). Let us denote by \(m(n,i)\) the \(i\)-th element
of \(I_n\) for all \(n \in \mathbb{N}\) and \(i \leq d\). Since \((\lambda_n)\) is bounded, by passing to a subsequence of \((u_n)\) if necessary,
there is no loss of generality in assuming that
\(\lim_n \lambda_{m(n,i)} = \mu_i \in \mathbb{C}\) for all \(i \leq d\).

Since \(K\) is \(\tau_F\)-compact, \(K^d\) is compact in the product topology induced by \(\tau_F\).
Define \(\vec{y_n} = \bigl ( y_{m(n,i)})_{i=1}^d\), for all \(n \in \mathbb{N}\), and choose a cluster point
\((z_i)_{i=1}^d \in K^d\) of \((\vec{y_n})\). Clearly, \(\sum_{i=1}^d \mu_i z_i\) is a \(\tau_F\)-cluster point of
\((u_n)\) which attains its norm at some element of \(F\). 
\end{proof}
In the sequel we shall make use of the following simple observation: Suppose that \((e_n)\) is a sequence in a Banach space
isometrically equivalent to the \(\ell_1\)-basis. Let \(d \in \mathbb{N}\), \(m_1 < \dots < m_d\) in \(\mathbb{N}\)
and non-zero scalars \((\lambda_i)_{i=1}^{d+1}\) with \(\sum_{i=1}^{d+1} |\lambda_i| = 1\).
Define \(u_n = \sum_{i=1}^d \lambda_i e_{m_i} + \lambda_{d+1} e_n\) for all \(n > m_d\).
Then \((u_n)_{n > m_d}\) generates \(\ell_1^+\) isometrically.
\begin{Lem} \label{L4}
Let \(Y\) be a Banach space, \(F \) a symmetric subset of \(B_{Y^*}\) and let \(K \subset Y\) be
\(F\)-admissible. Then \(K\) contains no sequence isometrically equivalent to the \(\ell_1\)-basis.
\end{Lem}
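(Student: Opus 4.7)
Assume for contradiction that $(e_n) \subset K$ is isometrically equivalent to the $\ell_1$-basis. The plan is to build, from $(e_n)$ and a $\tau_F$-cluster point of $(e_n)$ in $K$, a normalized $\ell_1^+$-isometric sequence to which Lemma \ref{L3} applies, and then derive a contradiction from the conclusion of Lemma \ref{L3}.

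By $F$-admissibility of $K$ applied to the $\ell_1$-sums $\sum_n (\epsilon_n/2^n) e_n$ for $\epsilon = (\epsilon_n) \in \{\pm 1\}^{\mathbb{N}}$, together with the symmetry of $F$, one obtains for each such $\epsilon$ a functional $y^*_\epsilon \in F$ with $y^*_\epsilon(e_n) = \epsilon_n$ for every $n$. By $\tau_F$-compactness of $K$, the sequence $(e_n)$ has a $\tau_F$-cluster point $e \in K$; each value $y^*_\epsilon(e)$ is a $\tau_F$-cluster value of $(\epsilon_n)$ and hence lies in $\{\pm 1\}$. Using the $y^*_\epsilon$'s for well-chosen $\epsilon$'s (e.g.\ the constant $+1$ sequence, or sequences differing from it at a single index) one computes that $\|e\|=1$ and $\|e-e_n\|=2$ for every $n$, so the sequence $u_n := (e-e_n)/2$ is normalized; a parallel computation, evaluating $\sum_n a_n u_n$ against a $y^*_\epsilon$ whose signs are tuned to $a$, shows that $(u_n)$ generates $\ell_1^+$ isometrically.

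Writing $u_n = -\tfrac12\, e + \tfrac12\, e_n$ with $e, e_n \in K$, index-sets $I_n = \{0, n\}$ of constant cardinality $2$, and bounded coefficients, Lemma \ref{L3} applies and produces $z^* \in F$ together with a subsequence $(n_k)$ such that $|z^*(e)-z^*(e_{n_k})| \to 2$; since $|z^*(\cdot)| \leq 1$ on $\{e\} \cup \{e_n : n \in \mathbb{N}\}$, this forces, after absorbing a unimodular scalar via the symmetry of $F$, $z^*(e) = -1$ and $z^*(e_{n_k}) \to +1$. The contradiction comes from requiring that $e$ be a $\tau_F$-cluster point of the subsequence $(e_{n_k})$: along any $\tau_F$-convergent subnet $(e_{n_{k_\alpha}})$ of $(e_{n_k})$ to $e$ one would have $z^*(e_{n_{k_\alpha}}) \to z^*(e) = -1$, whereas every subnet of the convergent sequence $(z^*(e_{n_k}))$ shares its limit $+1$, which is impossible. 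The main obstacle I anticipate is precisely this last step: ensuring that $e$ be a $\tau_F$-cluster of the specific subsequence $(e_{n_k})$ produced by Lemma \ref{L3}, since $(n_k)$ is chosen after $e$. I plan to resolve this by fixing a free ultrafilter $\mathcal{U}$ on $\mathbb{N}$ at the outset, taking $e = \tau_F\text{-}\lim_{\mathcal{U}} e_n$, and then arranging (via an iteration, or via a direct reexamination of the construction behind Lemma \ref{L3}) that $\{n_k\} \in \mathcal{U}$, which makes $e$ a $\tau_F$-cluster of $(e_{n_k})$ and closes the argument.
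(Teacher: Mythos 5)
The first two thirds of your argument are sound: the interpolating functionals \(y^*_\epsilon\) exist exactly as you say (norm attainment of \(\sum_n(\epsilon_n/2^n)e_n\) plus symmetry of \(F\)); \(\|e\|\le 1\) holds because \(|y^*(e)|\le 1\) for every \(y^*\in F\) while \(e\) itself attains its norm on \(F\) by \(F\)-admissibility; and taking \(\epsilon=-1\) on a finite set \(S\) and \(+1\) off \(S\) gives \(y^*_\epsilon(e)=1\), \(y^*_\epsilon(e_n)=-1\) for \(n\in S\), hence \(\|\sum_{n\in S}a_nu_n\|=\sum_{n\in S}a_n\) for positive scalars, which by approximation shows \((u_n)\) generates \(\ell_1^+\) isometrically. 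The gap is exactly where you locate it, and your proposed repair cannot work. If \(e=\tau_F\)-\(\lim_{\mathcal{U}}e_n\), then for every \(z^*\in F\) one has \(\lim_{\mathcal{U}}z^*(u_n)=\tfrac12\bigl(z^*(e)-z^*(e)\bigr)=0\), so \(\{n:\,|z^*(u_n)|>1/2\}\notin\mathcal{U}\); since all but finitely many of the \(n_k\) produced by Lemma \ref{L3} lie in that set and \(\mathcal{U}\) is free, \(\{n_k\}\in\mathcal{U}\) is impossible for any choice of \(z^*\) and any free ultrafilter. More conceptually, the conclusion of Lemma \ref{L3} (\(\limsup_n|z^*(u_n)|=1\) along \emph{some} subsequence) is simply true in your situation and carries no contradiction by itself; no tuning of the cluster point will make it collide with the subsequence chosen afterwards.

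The repair is to discard Lemma \ref{L3} at the last step and apply \(F\)-admissibility to an \emph{infinitely supported} positive combination, which is precisely how the paper's own proof concludes. Set \(w=\sum_n2^{-n}u_n\). Since \((u_n)\) is normalized and generates \(\ell_1^+\) isometrically, \(\|w\|=1\); moreover \(w=\tfrac12 e-\sum_n2^{-n-1}e_n\) is an \(\ell_1\)-combination of members of \(K\), so norm attainment and symmetry give \(z^*\in F\) with \(\sum_n2^{-n}z^*(u_n)=1\). As \(|z^*(u_n)|\le1\), this forces \(z^*(u_n)=1\), i.e.\ \(z^*(e)=1\) and \(z^*(e_n)=-1\), for \emph{every} \(n\), contradicting that \(e\) is a \(\tau_F\)-cluster point of \((e_n)\). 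With this ending your proof is in fact shorter than the paper's: the paper manufactures an \(\ell_1^+\)-isometric sequence of the form \((\tfrac12y_{m_n}-\tfrac12y_0)\), with \(y_0\) a \(\tau_F\)-cluster point of \((y_{m_n})\), by iterating Lemma \ref{L3} along nested infinite sets and diagonalizing, whereas your interpolating functionals \(y^*_\epsilon\) (which exploit the isometric \(\ell_1\)-ness directly) produce such a sequence in one step; both arguments then finish with the identical norm-attainment computation above.
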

\begin{proof}
Assume to the contrary that \((y_n)\) is a normalized sequence isometrically equivalent to 
the \(\ell_1\)-basis. Notice that \((\frac{1}{2} y_1 - \frac{1}{2} y_n)_{n \geq 2}\) generates
\(\ell_1^+\) isometrically. Since \(F\) is symmetric, Lemma \ref{L3} yields \(M_1 \in [\mathbb{N}]\),
\(\min M_1 > 1\), and \(y_1^* \in F\) so that 
\(\lim_{n \in M_1} y_1^*( \frac{1}{2} y_1 - \frac{1}{2} y_n) =1\). Set \(m_1 = 1\) and
\(m_2 = \min M_1\). Then
\((\frac{1}{4} y_{m_1} + \frac{1}{4} y_{m_2} - \frac{1}{2} y_n)_{n \in M_2 \setminus \{m_2\}}\)
generates \(\ell_1^+\) isometrically. So we choose, by Lemma \ref{L3} and the symmetry of \(F\), \(y_2^* \in F\)
and \(M_2 \in [M_1 \setminus \{m_2\}]\) so that
\(\lim_{n \in M_2} y_2^*( \frac{1}{4} y_{m_1} + \frac{1}{4} y_{m_2} - \frac{1}{2} y_n)=1\).

Continuing in this manner we inductively construct a nested sequence 
\(M_1 \supset M_2 \supset \cdots\) of infinite subsets of \(\mathbb{N}\) so that letting
\(m_n = \min M_{n-1}\), \(n \geq 2\), then \(m_n \notin M_n\) and there exists \(y_n^* \in F\)
so that
\[\lim_{k \in M_n} y_n^* \biggl ( \sum_{i=1}^n \frac{1}{2n} y_{m_i} - \frac{1}{2} y_k \biggr ) = 1,
\, \forall \, n \in \mathbb{N}\]
Let \(y_0 \in K\) be a \(\tau_F\)-cluster point of \((y_{m_n})\). Observe that
\(y_n^*( \frac{1}{2} y_{m_i} - \frac{1}{2} y_0 ) = 1\), for all \(i \leq n\) and \(n \in \mathbb{N}\).
It follows from this that \(( \frac{1}{2} y_{m_n} - \frac{1}{2} y_0 )_{n=1}^\infty\) generates
\(\ell_1^+\) isometrically. Finally, let \((a_n)\) be any sequence of positive scalars such that
\(\sum_n a_n =1\). We infer from our assumptions that 
\(\sum_n a_n ( \frac{1}{2} y_{m_n} - \frac{1}{2} y_0 )\) attains its norm at some element of \(F\).
Since \(F\) is symmetric, there exists \(z^* \in F\) so that
\(\sum_n a_n z^*( \frac{1}{2} y_{m_n} - \frac{1}{2} y_0 ) =1\), whence
\(z^*(y_{m_n}) - z^*(y_0) = 2\) for all \(n \in \mathbb{N}\). But this contradicts the fact that 
\(y_0\) is a \(\tau_F\)-cluster point of \((y_{m_n})\).
\end{proof}
\begin{Lem} \label{L5}
Let \(Y\) be a Banach space, \(F \) a symmetric subset of \(B_{Y^*}\) and let \(K \subset Y\) be
\(F\)-admissible. Assume that
\((y_n)\) is a normalized sequence in \(K\) generating \(\ell_1\)
almost isometrically. Then given \(r \in \mathbb{N}\), \(\Delta \subset \mathbb{T}\) finite and
a collection \((N_i)_{i=1}^r \) of pairwise disjoint members of \([\mathbb{N}]\), there exists 
a collection \((P_i)_{i=1}^r\)
with \(P_i \in [N_i]\) for all \(i \leq r\), satisfying the following property:
for every choice \(\theta_1, \dots, \theta_r\) of members of \(\Delta\) there exists \(y^* \in F\)
so that \(\lim_{k \in P_i} y^*(y_k) = \theta_i\) for all \(i \leq r\).
\end{Lem}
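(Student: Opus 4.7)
The plan is to construct $(P_i)_{i=1}^r$ by iteratively thinning out the $N_i$'s, once for each element of $\Delta^r$. Enumerate $\Delta^r = \{\vec{\theta}^{(1)}, \dots, \vec{\theta}^{(T)}\}$, where $T = |\Delta|^r$, and produce a descending chain $(N_i) = (Q_i^{(0)}) \supset (Q_i^{(1)}) \supset \cdots \supset (Q_i^{(T)})$ such that at stage $j$ there exists $y^{*,(j)} \in F$ with $\lim_{k \in Q_i^{(j)}} y^{*,(j)}(y_k) = \theta_i^{(j)}$ for every $i \leq r$. Because any such limit is inherited by every infinite subset, the collection $P_i := Q_i^{(T)}$ will then serve for every $\vec{\theta} \in \Delta^r$ simultaneously.

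The inductive step is the heart of the argument. Assume $(Q_i^{(j-1)})$ is in hand; let $q_i(m)$ denote the $m$-th element of $Q_i^{(j-1)}$ and form the auxiliary vectors
\[
v_m \;=\; \frac{1}{r}\sum_{i=1}^r \overline{\theta_i^{(j)}}\, y_{q_i(m)}, \qquad \tilde{v}_m \;=\; v_m/\|v_m\|.
\]
Since the $Q_i^{(j-1)}$'s are pairwise disjoint, each index $q_i(m)$ occupies a unique slot, so $\tilde{v}_m = \sum_{\ell \in I_m} \lambda_\ell y_\ell$ with $|I_m| = r$ and $(\lambda_\ell)$ a well-defined bounded sequence of scalars. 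The almost isometric generation of $\ell_1$ by $(y_n)$ forces $\|v_m\| \to 1$, and a direct estimate shows that for every $\epsilon > 0$ the tail $(\tilde{v}_m)_{m \geq m_0(\epsilon)}$ is normalized and satisfies $\|\sum a_m \tilde{v}_m\| \geq (1+\epsilon)^{-1} \sum a_m$ for every $(a_m) \in \ell_1^+$. This matches the hypotheses of Lemma \ref{L3} (applied to the $F$-admissible set $K$), which yields $y^* \in F$ with $\limsup_m |y^*(\tilde{v}_m)| = 1$.

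Next, I pass to a subsequence $(m_k)$ along which $y^*(\tilde{v}_{m_k}) \to \alpha$ with $|\alpha| = 1$, and refine once more so that $y^*(y_{q_i(m_k)}) \to d_i$ with $|d_i| \leq 1$ for every $i$. Combining the identity $y^*(\tilde{v}_{m_k}) = \|v_{m_k}\|^{-1} \cdot \frac{1}{r} \sum_i \overline{\theta_i^{(j)}}\, y^*(y_{q_i(m_k)})$ with $\|v_{m_k}\| \to 1$ gives $\frac{1}{r} \sum_i \overline{\theta_i^{(j)}} d_i = \alpha$. Each summand $\overline{\theta_i^{(j)}} d_i$ lies in the closed unit disk, and their average equals the unit-modulus $\alpha$, so equality in the triangle inequality in $\mathbb{C}$ forces $\overline{\theta_i^{(j)}} d_i = \alpha$, equivalently $d_i = \alpha\,\theta_i^{(j)}$, for every $i$. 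By the symmetry of $F$, $\bar\alpha\, y^* \in F$, and this functional satisfies $\bar\alpha\, y^*(y_{q_i(m_k)}) \to \theta_i^{(j)}$; setting $Q_i^{(j)} := \{q_i(m_k) : k \in \mathbb{N}\}$ closes the induction.

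The main obstacle I anticipate is the technical verification that the renormalized sequence $(\tilde{v}_m)$ truly fits the framework of Lemma \ref{L3}: that it is genuinely normalized, that the coefficients $\lambda_\ell$ still form a single bounded sequence indexed by the disjoint supports, and that the $\ell_1^+$ almost isometric generation survives the division by $\|v_m\|$. Once the convergence of the chosen subsequence is arranged, the identification $d_i = \alpha\,\theta_i^{(j)}$ via equality in the triangle inequality, and the subsequent rotation by $\bar\alpha$ using the symmetry of $F$, complete the inductive step cleanly.
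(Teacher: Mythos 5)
Your proposal is correct and follows essentially the same route as the paper: form the auxiliary vectors $\sum_i \overline{\theta_i}\,y_{l_{ki}}$ (the paper omits the $1/r$ averaging but this is cosmetic), normalize, invoke Lemma \ref{L3} to get a functional with $\limsup$ of modulus $1$, use equality in the triangle inequality to identify the limits $d_i = \alpha\theta_i$, and rotate by $\bar\alpha$ via the symmetry of $F$; the reduction to a single tuple of $\Delta^r$ by finite iteration is also the paper's argument. The technical points you flag (that $\|v_m\|\to 1$, that the normalized blocks with pairwise disjoint supports again generate $\ell_1^+$ almost isometrically, and that the coefficients remain bounded) are handled in the paper with the same brief justification you give.
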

\begin{proof}
Fix \(\theta_1, \dots, \theta_r\) in \(\Delta\). It is sufficient to find \(M_i \in [N_i]\),
\(i \leq r\), and \(y_0^* \in F\) so that
\(\lim_{k \in M_i} y_0^*(y_k) = \theta_i\) for all \(i \leq r\). Since \(\Delta^r\) is finite, by repeating
this process a finite number of times we shall arrive at the desired choices of \(P_1, \dots, P_r\).
We first choose a sequence \(I_1 < I_2 < \cdots < \) of successive finite subsets of \(\mathbb{N}\)
of the form \(I_j = \{l_{j1}, <  \cdots , < l_{jr} \}\), where \(l_{ji} \in N_i\) for all \(i \leq r\)
and \(j \in \mathbb{N}\). We define \(v_k = \sum_{i=1}^r \overline{\theta_i} y_{l_{ki}}\) 
and \(u_k = \frac{v_k}{\|v_k\|}\) for all \(k \in \mathbb{N}\). Since \((y_k)\) generates \(\ell_1\)
almost isometrically, by passing to a subsequence of \((v_k)\)
if necessary, there is no loss of generality in assuming that \(\lim_k \|v_k\| = r\). For the same reason we may
also assume that \((u_k)\) generates \(\ell_1\) almost isometrically. It is clear that \((u_k)\)
fulfills the hypotheses in Lemma \ref{L3} and hence there exists \(y_1^* \in F\) so that
\(\limsup_k |y_1^*(u_k)| =1\). Once again, by passing to a subsequence of \((u_k)\) if necessary,
we may assume that \(\lim_k |y_1^*(u_k)| =1\) and that \(\lim_k y_1^*(y_{l_{ki}}) =a_i \in \mathbb{C}\),
for all \(i \leq r\). 

It follows now that \(|\sum_{i=1}^r a_i \overline{\theta_i} | = r\). We deduce from this and the fact that
\(|a_i \theta_i| \leq 1\) for all \(i \leq r\) that there is some \(z \in \mathbb{T}\) such that
\(a_i \overline{\theta_i} = z\) for all \(i \leq r\). Set \(M_i = \{l_{ki} : \, k \in \mathbb{N} \}\)
for all \(i \leq r\) and \(y_0^* = \overline{z} y_1^*\). Since \(F\) is symmetric \(y_0^* \in F\)
and it is clear that \(\lim_{k \in M_i} y_0^* ( y_k ) = \theta_i\) for all \(i \leq r\).
\end{proof}
\begin{Lem} \label{L6}
Let \(Y\) be a Banach space, \(F \) a symmetric subset of \(B_{Y^*}\) and let \(K \subset Y\) be
\(F\)-admissible. Assume that
\((y_n)\) is a normalized sequence in \(K\) generating \(\ell_1\)
almost isometrically. Then there exists a subset of \(K\) of cardinality equal to the continuum whose
elements are \(\tau_F\)-cluster points of \((y_n)\), generating \(\ell_1(\mathbf{c})\) isometrically.
\end{Lem}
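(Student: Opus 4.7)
The plan is to index the desired continuum of cluster points by the branches of the dyadic tree $\mathcal{T}$. Fix a nested sequence $\Delta_1\subset\Delta_2\subset\cdots$ of finite subsets of $\mathbb{T}$ whose union is dense in $\mathbb{T}$, and build inductively a tree $(M_\alpha)_{\alpha\in\mathcal{T}}$ of infinite subsets of $\mathbb{N}$ with $M_\beta\subset M_\alpha$ whenever $\alpha\le\beta$ and $M_\alpha\cap M_{\alpha'}=\emptyset$ for incomparable $\alpha,\alpha'$. At level $1$, I would partition $\mathbb{N}$ into two disjoint infinite sets and apply Lemma \ref{L5} with $r=2$, $\Delta=\Delta_1$ to produce $M_{(0)}$ and $M_{(1)}$. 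At the inductive step from level $n$ to level $n+1$, split each $M_\alpha$, $\alpha\in\{0,1\}^n$, into two disjoint infinite subsets to form $2^{n+1}$ pairwise disjoint infinite sets $N_{\alpha,\epsilon}$, then apply Lemma \ref{L5} with $r=2^{n+1}$ and $\Delta=\Delta_{n+1}$, producing refinements $M_{\alpha\frown\epsilon}\subset N_{\alpha,\epsilon}$. By construction, for each $n$ and every labelling $(\theta_\alpha)_{\alpha\in\{0,1\}^n}$ with values in $\Delta_n$, some $y^*\in F$ satisfies $\lim_{k\in M_\alpha}y^*(y_k)=\theta_\alpha$ for all $\alpha$ at level $n$.

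For each infinite branch $\beta\in\{0,1\}^{\mathbb{N}}$, the $\tau_F$-closures of $\{y_k:k\in M_{\beta|n}\}$ form a decreasing chain of non-empty $\tau_F$-closed subsets of $K$; by $\tau_F$-compactness of $K$ the chain has non-empty intersection, from which I pick $z_\beta$. Each $z_\beta$ is then a $\tau_F$-cluster point of $(y_k)_{k\in M_{\beta|n}}$ for every $n$, and in particular of $(y_n)$. The aim is to show that $\{z_\beta:\beta\in\{0,1\}^{\mathbb{N}}\}$ generates $\ell_1(\mathbf{c})$ isometrically, which automatically forces all $z_\beta$ to be distinct and thereby yields the required continuum of cluster points in $K$.

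To verify the isometric identity, fix distinct branches $\beta^{(1)},\ldots,\beta^{(r)}$ and scalars $a_1,\ldots,a_r$ with $\sum_i|a_i|=1$, and set $v=\sum_i a_i z_{\beta^{(i)}}$. For the upper bound, $F$-admissibility of $K$ supplies $y^*\in F$ with $\|v\|=|y^*(v)|$; since $y^*\in B_{Y^*}$ and $z_{\beta^{(i)}}$ is a $\tau_F$-cluster point of the normalized sequence $(y_k)$, the scalar $y^*(z_{\beta^{(i)}})$ is a cluster point of $(y^*(y_k))$ in $\mathbb{C}$, so $|y^*(z_{\beta^{(i)}})|\le\limsup_k|y^*(y_k)|\le 1$, whence $\|v\|\le\sum_i|a_i|=1$. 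For the lower bound, given $\epsilon>0$ pick $n$ so large that $\beta^{(1)}|n,\ldots,\beta^{(r)}|n$ are pairwise distinct and choose $\theta_i\in\Delta_n$ with $\bigl|a_i\theta_i-|a_i|\bigr|<\epsilon/r$ for every $i$ (padding the remaining leaves at level $n$ by arbitrary members of $\Delta_n$). The tree property delivers $y^*\in F$ with $\lim_{k\in M_{\beta^{(i)}|n}}y^*(y_k)=\theta_i$, and the corresponding cluster point identity forces $y^*(z_{\beta^{(i)}})=\theta_i$ for every $i$. Consequently $|y^*(v)|\ge\sum_i|a_i|-\sum_i\bigl|a_i\theta_i-|a_i|\bigr|\ge 1-\epsilon$, so $\|v\|\ge 1-\epsilon$, and letting $\epsilon\downarrow 0$ finishes the argument.

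The main technical obstacle is the inductive tree construction: each refinement must retain the capacity to realise \emph{every} $\Delta_n$-labelling through a single functional in $F$, and this is precisely what Lemma \ref{L5} certifies when applied to the full level-$n$ antichain of leaves. Beyond that, the proof is a clean interplay between $\tau_F$-compactness of $K$ (for existence of the cluster points $z_\beta$ and the upper bound via $F$-admissibility) and the density of $\bigcup_n\Delta_n$ in $\mathbb{T}$ (for the lower bound).
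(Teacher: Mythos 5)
Your proposal follows essentially the same route as the paper: a tree \((M_\alpha)_{\alpha\in\mathcal{T}}\) built by iterating Lemma \ref{L5} over increasingly fine finite nets \(\Delta_n\) of \(\mathbb{T}\), one \(\tau_F\)-cluster point per branch, and the same lower-bound estimate \(|y^*(v)|\geq\sum_i|a_i|-\epsilon\) obtained by realising a labelling \((\theta_i)\) with \(a_i\theta_i\approx|a_i|\); the upper bound via a norming functional from \(F\)-admissibility is a trivial variant of the paper's observation that \(\|y_{\mathfrak b}\|=1\). The one step that does not work as written is the selection of \(z_\beta\): a point of \(\bigcap_n\overline{\{y_k:k\in M_{\beta|n}\}}^{\tau_F}\) need not be a \(\tau_F\)-cluster point of the sequences \((y_k)_{k\in M_{\beta|n}}\) (for instance it could be \(y_{k_0}\) for some \(k_0\in\bigcap_n M_{\beta|n}\)), and the identity \(y^*(z_{\beta^{(i)}})=\theta_i\) on which both of your bounds rest would then fail. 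This is easily repaired — intersect the closures of the tails \(\{y_k:k\in M_{\beta|n},\,k>n\}\), or, as the paper does, fix a pseudo-intersection \(N_\beta\) of the \(M_{\beta|n}\) and take a genuine \(\tau_F\)-cluster point of \((y_k)_{k\in N_\beta}\), which exists by \(\tau_F\)-compactness of \(K\) — after which the rest of your argument goes through.
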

\begin{proof}
Let \((\epsilon_n)\) be a scalar sequence strictly decreasing to \(0\) and choose an increasing sequence
\(\Delta_1 \subset \Delta_2 \subset \cdots \) of finite subsets of \(\mathbb{T}\) such that \(\Delta_n\)
is an \(\epsilon_n\)-net for \(\mathbb{T}\) for all \(n \in \mathbb{N}\).
Successive applications of Lemma \ref{L5} yield a tree \((M_\alpha)_{\alpha \in \mathcal{T}}\) of
infinite subsets of \(\mathbb{N}\) with the following property: For each \(n \in \mathbb{N}\) and all choices
\((z_\alpha)_{\alpha \in \{0,1\}^n}\) of elements from \(\Delta_n\) there exists \(y^* \in F\) so that
\(\lim_{k \in M_\alpha} y^*(y_k) = z_\alpha\) for all \(\alpha \in \{0,1\}^n\).

Let \(\mathfrak{b} = \{\beta_1, < \beta_2, < \dots \}\) be a branch of \(\mathcal{T}\).
Let \(N_{\mathfrak{b}} \in [\mathbb{N}]\) be almost contained in each \(M_{\beta_j}\) (i.e, 
\(N_{\mathfrak{b}} \setminus M_{\beta_j}\) is finite for all \(j \in \mathbb{N}\)).
Let \(y_{\mathfrak{b}} \in K\) be a \(\tau_F\)-cluster point of \((y_n)_{n \in N_{\mathfrak{b}}}\).
If \(\mathfrak{B}\) denotes the set of all branches of \(\mathcal{T}\) then we
claim that \((y_{\mathfrak{b}})_{\mathfrak{b} \in \mathfrak{B}}\) 
generates \(\ell_1(\mathbf{c})\) isometrically.

Indeed, suppose that \(m \in \mathbb{N}\) and that \(\mathfrak{b}^1, \dots , \mathfrak{b}^m\)
are distinct branches of \(\mathcal{T}\). Write
\(\mathfrak{b}^i = \{\beta_1^i, < \beta_2^i, < \dots \}\), \(i \leq m\). Given \(p \in \mathbb{N}\)
choose an integer \(n_0 > p\) so that \(\beta_{n_0}^i \ne \beta_{n_0}^j\) whenever \(i \ne j\) 
in \(\{1, \dots , m\}\). Let \((a_i)_{i=1}^m\) be scalars and write \(a_i = |a_i| z_i \) with \(z_i \in \mathbb{T}\)
for all \(i \leq m\). We then choose, for all \(i \leq m\), \(\theta_i \in \Delta_{n_0}\) so that
\(|\theta_i - \overline{z}_i | < \epsilon_{n_0}\). Our construction yields \(y^* \in F\) such that
\(\lim_{k \in M_{\beta_{n_0}^i}} y^*(y_k) = \theta_i\) for all \(i \leq m\).
It follows that \(\lim_{k \in N_{\mathfrak{b}^i}} y^*(y_k) = \theta_i\) for all \(i \leq m\) and therefore
\(y^*(y_{\mathfrak{b}^i}) = \theta_i\) for all \(i \leq m\). 
We infer now from the \(F\)-admissibility of \(K\) that \( \| y_{\mathfrak{b}^i} \| = 1\) for all \(i \leq m\).
Finally we have the estimates
\begin{align}
&\biggl \| \sum_{i=1}^m a_i y_{\mathfrak{b}^i} \biggr \| \geq 
\biggl | \sum_{i=1}^m a_i y^*(y_{\mathfrak{b}^i}) \biggr | =
\biggl | \sum_{i=1}^m a_i \theta_i \biggr | \notag \\
&= \biggl | \sum_{i=1}^m a_i \overline{z}_i - \sum_{i=1}^m a_i (\overline{z}_i - \theta_i ) \biggr | \geq
\sum_{i=1}^m |a_i|  - \sum_{i=1}^m |a_i| \epsilon_{n_0} \notag \\
&\geq (1 - \epsilon_p) \sum_{i=1}^m |a_i| \notag
\end{align}
Since \(p \in \mathbb{N}\) is arbitrary and \(\lim_n \epsilon_n = 0\), we conclude that
\(\| \sum_{i=1}^m a_i y_{\mathfrak{b}^i} \| = \sum_{i=1}^m |a_i|\) for every \(m \in \mathbb{N}\)
and all choices of scalars \((a_i)_{i=1}^m\). Hence, 
\((y_{\mathfrak{b}})_{\mathfrak{b} \in \mathfrak{B}}\) 
generates \(\ell_1(\mathbf{c})\) isometrically.
\end{proof}
\begin{Cor} \label{C4}
Let \(Y\) be a Banach space, \(F \) a symmetric subset of \(B_{Y^*}\) and let \(K \subset Y\) be
\(F\)-admissible. Suppose that \(Z\) is a closed linear subspace of \(Y\) isomorphic to \(\ell_1\).
Then \(B_Z \setminus K \ne \emptyset\).
\end{Cor}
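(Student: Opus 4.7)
The plan is to argue by contradiction: assume that $B_Z \subset K$ and use James's distortion theorem together with Lemmas~\ref{L4} and \ref{L6} to reach a contradiction.

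First I would invoke James's distortion theorem \cite{J2}: since $Z$ is a closed subspace of $Y$ isomorphic to $\ell_1$, for every $\epsilon > 0$ the space $Z$ contains a normalized basic sequence that is $(1+\epsilon)$-equivalent to the $\ell_1$-basis. A standard diagonal/tail-sequence extraction then yields a normalized sequence $(y_n)$ in $Z$ which generates $\ell_1$ almost isometrically (in the sense of the definition given in the Preliminaries). Because each $y_n$ lies in $B_Z$ and we are assuming $B_Z \subset K$, we have $(y_n) \subset K$.

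Next I would apply Lemma~\ref{L6} to the sequence $(y_n) \subset K$: the $F$-admissibility of $K$, together with the symmetry of $F$, produces a subset of $K$ of cardinality $\mathbf{c}$ whose elements generate $\ell_1(\mathbf{c})$ isometrically. In particular, selecting any countably infinite collection of these vectors yields a sequence in $K$ that is isometrically equivalent to the $\ell_1$-basis. But this directly contradicts Lemma~\ref{L4}, which asserts that an $F$-admissible set cannot contain such a sequence. Hence the assumption $B_Z \subset K$ is untenable, and $B_Z \setminus K \ne \emptyset$.

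The conceptually delicate step is the very first one: passing from the mere isomorphic embedding of $\ell_1$ into $Z$ to an almost isometric copy inside $Z$ on which Lemma~\ref{L6} can be deployed. This is precisely what James's distortion theorem supplies, and without it one could only produce sequences that generate $\ell_1$ up to some fixed constant, which would not be sufficient to run the isometric tree construction in Lemma~\ref{L6}. Everything else is a clean application of the two preceding lemmas.
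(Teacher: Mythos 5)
Your proposal is correct and follows essentially the same route as the paper: assume $B_Z \subset K$, invoke James's distortion theorem to produce a normalized sequence in $K$ generating $\ell_1$ almost isometrically, feed it into Lemma~\ref{L6} to obtain an isometric $\ell_1$-sequence in $K$, and contradict Lemma~\ref{L4}. The extra remarks on the diagonal extraction and on why almost-isometric (rather than merely isomorphic) copies are needed are accurate but do not change the argument.
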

\begin{proof}
Let us assume that \(B_Z \subset K\). James's distortion theorem \cite{J2} now yields a normalized sequence \((y_n)\)
in \(K\) generating \(\ell_1\) almost isometrically. We deduce from Lemma \ref{L6} that \(K\) contains a
normalized sequence isometrically equivalent to the \(\ell_1\)-basis, contradicting Lemma \ref{L4}.
\end{proof}
\begin{proof}[Proof of Theorem \ref{MTh}]
There is no loss of generality in assuming that \(F\) is symmetric. Suppose that \((x_n^*)\) is a bounded sequence in \(X^*\)
such that \((Tx_n^*)\) is equivalent to the \(\ell_1\)-basis. It follows that \((x_n^*)\) is also equivalent to the
\(\ell_1\)-basis. Let \(Z\) denote the closed linear subspace of \(Y\) generated by \((Tx_n^*)\). It is clear now that
there exists a closed ball \(B^* \subset X^*\), centered at the origin, so that \(B_Z \subset TB^*\). But \(T\) is
\((w^*, \tau_F)\) continuous and so \(K = TB^*\) is \(\tau_F\)-compact. \(K\) is also bounded because \(T\) is. Since every
vector in the range of \(T\) attains its norm at an element of \(F\), we obtain that \(K\) is \(F\)-admissible. However,
\(Z\) is isomorphic to \(\ell_1\) and \(B_Z \subset K\) contradicting Corollary \ref{C4}.

We infer from the above that for every bounded sequence \((x_n^*)\) in \(X^*\), \((Tx_n^*)\) admits no \(\ell_1\)-subsequence.
Rosenthal's \(\ell_1\)-Theorem \cite{R} now yields a weak Cauchy subsequence \((Tx_{k_n}^*)\) of \((Tx_n^*)\). Let \(x_0^* \in X^*\)
be a \(w^*\)-cluster point of \((x_{k_n}^*)\). The \((w^*, \tau_F)\) continuity of \(T\) implies that \(Tx_0^*\) is a \(\tau_F\)-cluster point of \((Tx_{k_n}^*)\).
Since the latter sequence is weak Cauchy, it follows that \(\lim_n y^* ( Tx_{k_n}^*) = y^* (Tx_0^*)\) for all \(y^* \in F\).
On the other hand, \(\sum_n a_n ( Tx_{k_n}^* - Tx_0^*) \in \mathrm{ Im }(T)\) for all \((a_n) \in \ell_1^+\) and therefore        
we deduce from our assumptions on \(T\), that it attains its norm at an element of \(F\). Corollary \ref{C1} now yields that 
\(( Tx_{k_n}^*)\) is weakly convergent to \(Tx_0^*\). We conclude from the Eberlein-Smulian theorem that \(T\) maps
\(w^*\)-compact subsets of \(X^*\) to weakly compact subsets of \(Y\). In particular, \(T\) is weakly compact.

We next show that \(T\) is \((w^*, w)\) continuous. To this end, we first claim that if \(V \subset Y\) is norm-closed and convex 
then \(T^{-1}V\) is a \(w^*\)-closed subset of \(X^*\). Indeed, \(T^{-1}V\) is convex and thus, by the Krein-Smulian theorem for the \(w^*\) topology \cite{KS}
(cf. also \cite{DS}),
it suffices showing that \(U^* \cap T^{-1}V\) is \(w^*\)-closed for every closed ball \(U^*\) in \(X^*\) centered at the origin. So let \((u_\lambda^*)_{\lambda \in \Lambda}\)
be a net in \(U^* \cap T^{-1}V\) \(w^*\)-converging to some \(u^* \in X^*\). 
Clearly, \(u^* \in U^*\) and so we need to show that \(Tu^* \in V\). We deduce from the first part of the proof, that
\(TU^*\) is a weakly compact subset of \(Y\). Therefore, there is no loss of generality, by passing to a subnet if necessary, to assume that  
\((Tu_\lambda^*)_{\lambda \in \Lambda}\) weakly converges to some \(y_0 \in TU^*\). Since \(V\) is weakly closed, by Mazur's theorem, we obtain that 
\(y_0 \in V \cap TU^*\). Note that \((Tu_\lambda^*)_{\lambda \in \Lambda}\) \(\tau_F\)-converges to \(Tu^*\) as \(T\) is \((w^*, \tau_F)\) continuous.
It follows that \(y^*(Tu^*) = y^*(y_0)\) for all \(y^* \in F\). Note also that \(F\) separates points in \(\mathrm{ Im }(T)\). Since
both \(Tu^*\) and \(y_0\) belong to \(TU^*\), we are led to the identity
\(Tu^* = y_0 \in V\) which proves our claim. 

We next claim that if \((x_\lambda^*)_{\lambda \in \Lambda}\) is a net in \(X^*\) \(w^*\)-converging to the origin, then
\((Tx_\lambda^*)_{\lambda \in \Lambda}\) is weakly converging to the origin in \(Y\). Were this false, we would find
\(z^* \in Y^*\), \(\delta > 0\) and a subnet \((Tx_\mu^*)_{\mu \in M}\) of \((Tx_\lambda^*)_{\lambda \in \Lambda}\) 
so that \(|z^* (Tx_\mu^*) | \geq \delta\) for all \(\mu \in M\). Without loss of generality, by replacing \(z^*\) by \(\theta z^*\) for a suitable
\(\theta \in \mathbb{T}\) and passing to a further subnet if necessary,
we may assume that \(Tx_\mu^* \in V_0\) for all \(\mu \in M\), where we have set
\(V_0 = \{ y \in Y : \, \mathrm{ Re } [z^* (y)] \geq \delta /2\}\). It is clear that
\(V_0\) is a norm-closed and convex subset of \(Y\) and hence our initial claim yields that
\(T^{-1}V_0\) is a \(w^*\)-closed subset of \(X^*\) containing the subnet \((x_\mu^*)_{\mu \in M}\).
However, this subnet is \(w^*\)-convergent to the origin of \(X^*\) and thus \(V_0\) contains the origin of \(Y\) which is absurd.
Thus, our claim holds and so \(T\) is indeed \((w^*, w)\) continuous.
\end{proof}
\begin{Cor}[James's compactness Theorem] \label{Jamsup}
Let \(X\) be a Banach space and let \(K \subset X\) be bounded. Assume that every \(x^* \in X^*\) attains its supremum on \(K\).
Then \(K\) is relatively weakly compact.
\end{Cor}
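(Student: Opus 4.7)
The plan is to cast the problem into the setting of Theorem \ref{MTh} exactly as outlined in the introduction. Identify $X$ with a closed subspace of $X^{**}$ and set $L = \overline{K}^{w^*}$, which is $w^*$-compact by the boundedness of $K$. Put $Y = C(L)$ under the supremum norm and let $R \colon X^* \to Y$ be the natural restriction operator $R(x^*) = x^*|_L$ (regarding each $x^* \in X^*$ as a $w^*$-continuous function on $X^{**}$). Each $x \in K \subset L$ yields an evaluation functional $\delta_x \in B_{Y^*}$; set $F = \mathbb{T} \cdot \{\delta_x : x \in K\}$, a symmetric subset of $B_{Y^*}$ whose $\tau_F$ topology on $Y$ coincides with pointwise convergence on $K$.

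Next I would verify the three hypotheses of Theorem \ref{MTh} for $R$ and $F$. Boundedness of $R$ is immediate from boundedness of $K$. For $(w^*, \tau_F)$-continuity, if a net $x_\lambda^* \to x^*$ in the $w^*$-topology, then for each $x \in K$ and each $\theta \in \mathbb{T}$ one has $(\theta \delta_x)(R(x_\lambda^*)) = \theta x_\lambda^*(x) \to \theta x^*(x) = (\theta \delta_x)(R(x^*))$, so $R(x_\lambda^*) \to R(x^*)$ in $\tau_F$. For the norm-attainment requirement, since $x^* \in X^*$ is $w^*$-continuous on $X^{**}$, we have $\|R(x^*)\|_\infty = \sup_{\ell \in L}|\ell(x^*)| = \sup_{x \in K}|x^*(x)|$; applying the hypothesis to $\theta x^*$ for each $\theta \in \mathbb{T}$, and using the compactness of $\mathbb{T}$ together with the continuity of $\theta \mapsto \sup_{x \in K}\mathrm{Re}(\theta x^*(x))$, one obtains $\theta_0 \in \mathbb{T}$ and $x_0 \in K$ with $|x^*(x_0)| = \sup_{x \in K}|x^*(x)|$, so that $|(\theta_0 \delta_{x_0})(R(x^*))| = \|R(x^*)\|_\infty$.

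Theorem \ref{MTh} then yields that $R$ is $(w^*, w)$-continuous, and in particular $R(B_{X^*})$ is weakly compact in $Y$, so $R$ is weakly compact. Corollary \ref{C2} then concludes that $K$ is relatively weakly compact. The proof is little more than a bookkeeping exercise now that Theorem \ref{MTh} and Corollary \ref{C2} are in hand; the only mildly delicate point is the passage from the hypothesized attainment of $\sup x^*$ on $K$ to attainment of $\sup |x^*|$ on $K$, which is where the rotational averaging over $\mathbb{T}$ is needed to match the $C(L)$-norm of $R(x^*)$ with a value attained at a point of the symmetrized set $F$.
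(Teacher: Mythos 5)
Your proof is correct and follows essentially the same route as the paper's: the same restriction operator \(R\colon X^*\to C(L)\), the same set of evaluation functionals \(F\), an appeal to Theorem \ref{MTh} to get weak compactness of \(R\), and Corollary \ref{C2} to finish. The only difference is cosmetic: you symmetrize \(F\) by \(\mathbb{T}\) and spell out the rotation argument needed to pass from attainment of the supremum to attainment of the \(C(L)\)-norm, details the paper leaves implicit (it takes \(F=\{\delta_x:\,x\in K\}\) and absorbs the symmetrization into the proof of Theorem \ref{MTh}).
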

\begin{proof}
We naturally
identify \(X\) with a closed subspace of \(X^{**}\) and set \(L = \overline{K}^{w^*}\). 
Let \(R \colon X^* \to C(L)\) be the natural restriction operator. Let \(F = \{\delta_x : \, x \in K\}\), where \(\delta_x\) stands for the Dirac measure at \(x \in K\).
Clearly, \(R\) is \((w^*, \tau_F)\) continuous. Our assumptions on \(K\) imply that every vector in the range of \(R\) attains its norm at an element of \(F\).
We deduce from Theorem \ref{MTh} that
\(R\) is weakly compact and hence, \(K\) is relatively weakly compact by Corollary \ref{C2}.
\end{proof}

\end{document}